\documentclass[11pt]{amsart}

\usepackage[top=3.5cm, bottom=3.5cm, left= 3cm, right= 3cm, centering]{geometry}
\usepackage{xcolor}
\usepackage{esint}
\usepackage{amssymb}

\usepackage{graphicx}
\usepackage{stmaryrd}
\usepackage{mathtools}

\usepackage[toc]{appendix}

\usepackage[colorlinks=true, pdfstartview=FitV, linkcolor=blue, citecolor=blue, urlcolor=blue,pagebackref=false]{hyperref}
\usepackage{microtype}

\usepackage{bm}
\usepackage{scalerel} 
\usepackage{mathrsfs}
\usepackage[english]{babel}
\usepackage[font={footnotesize}]{caption}
\usepackage{enumitem}
\usepackage{comment}
\usepackage{scalerel,stackengine}

\usepackage[symbol]{footmisc}

\stackMath
\newcommand\reallywidehat[1]{
	\savestack{\tmpbox}{\stretchto{
			\scaleto{
				\scalerel*[\widthof{\ensuremath{#1}}]{\kern-.6pt\bigwedge\kern-.6pt}
				{\rule[-\textheight/2]{1ex}{\textheight}}
			}{\textheight}
		}{0.5ex}}
	\stackon[1pt]{#1}{\tmpbox}
}
\parskip 1ex

\definecolor{darkgreen}{rgb}{0,0.5,0}
\definecolor{darkblue}{rgb}{0,0,0.7}
\definecolor{darkred}{rgb}{0.9,0.1,0.1}
\definecolor{lightblue}{rgb}{0,0.51,1}

\usepackage[utf8]{inputenc}
\usepackage{chngcntr}
\usepackage{apptools}

\newtheorem{theorem}{Theorem}[section]
\newtheorem{definition}[theorem]{Definition}
\newtheorem{remark}[theorem]{Remark}
\newtheorem{lemma}[theorem]{Lemma}
\newtheorem{proposition}[theorem]{Proposition}
\newtheorem{assumption}[theorem]{Assumption}

\numberwithin{equation}{section}
\numberwithin{theorem}{section}

\newcommand{\N}{\mathbb{N}}
\newcommand{\R}{\mathbb{R}}

\renewcommand{\subset}{\subseteq}

\renewcommand{\tilde}{\widetilde}

\allowdisplaybreaks

\newcommand{\Ker}{{\rm Ker}\,}

\def\d{\partial}

\newcommand{\vertiii}[1]{{\left\vert\kern-0.25ex\left\vert\kern-0.25ex\left\vert #1 
		\right\vert\kern-0.25ex\right\vert\kern-0.25ex\right\vert}} 

\begin{document}
	
	\author[E. Bocchi]{Edoardo Bocchi}
	\address[E. Bocchi]{Departamento de An\'alisis Matem\'atico \& Instituto de Matem\'aticas de la Universidad de Sevilla, Universidad de Sevilla, Avenida Reina Mercedes, 41012 Sevilla, Espa\~{n}a}
	\email{ebocchi@us.es}
	
	\author[J. He]{Jiao He}
	\address[J. He]{Université Paris-Saclay, CNRS, Laboratoire de mathématiques d’Orsay, 91405, Orsay, France.}
	\email{jiao.he@universite-paris-saclay.fr}

	\author[G. Vergara-Hermosilla]{Gastón Vergara-Hermosilla}
	\address[G. Vergara-Hermosilla]{Institut de Math\'ematiques de Bordeaux, Universit\'e de Bordeaux, 351 cours de la Liberation, 33405, Talence, France.}
\email{gaston.v-h@outlook.com}

	\title[Well-posedness of an OWC in shallow water]{Well-posedness of an oscillating water column in shallow water with time-dependent air pressure}
	
\title[Well-posedness of an OWC in shallow water]{Well-posedness of a nonlinear shallow water model for an oscillating water column with time-dependent air pressure}
	
\keywords{Oscillating water column; Fluid-structure interaction; Initial boundary value problems for hyperbolic PDEs; Time-dependent air pressure; Local well-posedness.}
	\subjclass[2010]{
		35Q35; 76B15; 35L04; 74F10.
	}
	
\begin{abstract}
We propose in this paper a new nonlinear mathematical model of an oscillating water column (OWC). The one-dimensional shallow water equations in the presence of this device is reformulated as a transmission problem related to the interaction between waves and a fixed partially-immersed structure. By imposing the conservation of the total fluid-OWC energy in the non-damped scenario, we are able to derive a transmission condition that involves a time-dependent air pressure inside the chamber of the device, instead of a constant atmospheric pressure as in \cite{bocchihevergara2021}.
We then show that the  transmission problem can be reduced to a quasilinear hyperbolic initial boundary value problem with a semi-linear boundary condition determined by an ODE depending on the trace of the solution to the PDE at the boundary.
Local well-posedness for general problems of this type is established via an iterative scheme by using linear estimates for the PDE and nonlinear estimates for the ODE. \\

\end{abstract}
\maketitle

\section{Introduction}
\subsection{General settings.}
This article is devoted to the modelling and the mathematical analysis of a particular wave energy converter (WEC) called oscillating water column (OWC). In this device, incoming waves arrive from the offshore, collide against a partially-immersed fixed structure. The incident wave rebounds but part of the water passes below the structure and enters a partially-closed chamber, whose boundaries are the partially-immersed structure at the left, a solid wall at the right and a solid wall with a hole at the top, see Figure \ref{OWC}. The water volume inside the chamber increases and compresses air at the upper end of the chamber, forcing it through the hole where a turbine is installed and creates electrical energy. Viceversa, when the water volume decreases, the air outside the chamber enters, activates the turbine and increases the air volume inside the chamber.  The name OWC comes from the fact that it behaves like an oscillating liquid piston, a water column, that compresses air inside the chamber. Therefore, this device allows to convert the energy (both kinetic and potential) associated with a moving wave into useful energy. 
For more details on the transformation of wave energy to electric energy in this type of WEC we refer to 
\cite{pecher2017handbook}. OWCs are one example of a large variety of WECs that can be found in hydrodynamical engineering. For their classification and description, we refer the interested readers to
\cite{babarit2018}.\\
Among all these devices, floating structures and their interaction with water waves have been particularly studied in the last years. In \cite{lannes2017floating} Lannes derived a model for the interaction between waves and floating structures taking into account nonlinear effects, which have been neglected in previous analytical approaches in the literature (see for instance \cite{john1949,john1950} where floating structures first were modelled). He derived the model in the general multidimensional case considering a depth-averaged formulation of the water waves equations and then the shallow water asymptotic models for the fluid motion given by the nonlinear shallow water equations and the Boussinesq equations. In \cite{IguLan21} Iguchi and Lannes proved the local well-posedness of the one-dimensional nonlinear shallow water equations in the presence of a freely moving floating structure with non-vertical side-walls. In \cite{bocchi2020floating}  Bocchi showed 
the local well-posedness
of the nonlinear shallow water equations in the two-dimensional axisymmetric without swirl case for a floating object moving only vertically and with vertical side-walls. In \cite{bresch2019waves} Bresch,  Lannes and Métivier considered the case when the structure is fixed with vertical walls and the fluid equations are governed by the one-dimensional Boussinesq equations. Local well-posedness was obtained in the same time scale as in the absence of an object, that is $O(\varepsilon^{-1})$ where $\varepsilon$ is the nonlinearity parameter. Recently, Beck and Lannes in \cite{lannesbeck2021boussinesq} extended the previous analysis to the case of a floating structure with vertical or non-vertical side-walls having only a vertical motion, for which a shorter time scale $O(\varepsilon^{-1/2})$ is obtained. In \cite{mai-sm-taka-tuc19} Maity, San Mart{\'\i}n, Takahashi and Tucsnak considered one-dimensional viscous shallow water equations and a solid with vertical side-walls moving vertically. In this viscous case, they showed local well-posedness for every initial data and 
global one if the initial data are close to an equilibrium state. Furthermore, a particular configuration has been investigated, called the return to equilibrium, where the floating structure is dropped from a non-equilibrium position with zero initial velocity into the fluid initially at rest and let evolve towards its equilibrium state. This problem can be easily done experimentally in wave tanks and is used to determine important characteristics of floating objects. Engineers assume that the solid motion is governed by a linear integro-differential equation, the Cummins equation, that was empirically derived by Cummins in \cite{cummins1962impulse} and the experimental data coming from this configuration are used to determine the coefficients of this equation. A nonlinear Cummins equation in the one-dimensional case was derived by Lannes in \cite{lannes2017floating} and a nonlinear integro-differential Cummins equation was derived in the two dimensional axisymmetric without swirl case by Bocchi in \cite{bocchi2020onthereturn}.
Recently, Beck and Lannes in \cite{lannesbeck2021boussinesq} 
derived in the one-dimensional case an abstract  Cummins-type equation that takes an explicit simple form in the nonlinear non-dispersive and the linear dispersive cases. More recently,   Vergara-Hermosilla,  Matignon and Tucsnak in  \cite{mati-tuc-vergara21} derived explicitly the asymptotic behavior of a Cummins-type equation including viscous effect in the one-dimensional case. 

In the last decades oscillating water columns have been widely investigated both experimentally and numerically in the hydrodynamical engineering literature for the sake of understanding how to increase the performance of these wave energy converters in order to facilitate a real installation. For instance, we refer to \cite{Dimakopoulos-Cooker-Bruce2017, evans1995, fanhelgato16air, lopez2015, reza2013, reza2018, reza2017}  and references therein. 
All these works were essentially based on the linear water wave theory introduced
by Evans
in \cite{evans1978, evans1982}, in which the wave motion is assumed time-harmonic. Motivated by the lack of a nonlinear analysis for this type of wave energy converter, we modelled and simulated an OWC in a first paper \cite{bocchihevergara2021} taking into account the nonlinear effects following the series of works in the case of floating structures presented before. As a first and simpler approach, a constant air pressure was considered inside the chamber, although it does not seem realistic since the variations of the fluid volume cause variations of the air volume inside the chamber.
Moreover, inspired by \cite{reza2013} we considered in the model of \cite{bocchihevergara2021} a discontinuous topography by adding a step in the sea bottom in front of the device. Recently, the exact boundary controllability of that simplified OWC model was treated by Vergara-Hermosilla, Leugering and Wang in \cite{vergara-leugering-wang2021}.

This article is a direct continuation of \cite{bocchihevergara2021} and its aim is twofold:
\begin{enumerate}
    \item \label{goal1}
    derive a nonlinear model that describes the interaction between waves and the OWC by taking into account time-variations of the air pressure inside the chamber;\\[1pt]
     \item \label{goal2} 
     establish a local well-posedness result for the transmission problem across the structure side-walls in the Sobolev setting.
\end{enumerate}
Since the interest of this new work lies only in the wave-structure interaction of the OWC, we do not consider neither the open sea situation nor the step in front of the device, whose rigourous mathematical analysis has already been treated in \cite[Section 6.1]{IguLan21}. Indeed, we work with a 
bounded fluid domain with a flat bottom.

\subsection{Main notations} 
The configuration of the wave energy device considering is presented on Figure \ref{OWC}.
\begin{figure}
\centering  
\includegraphics[width=0.6\textwidth]{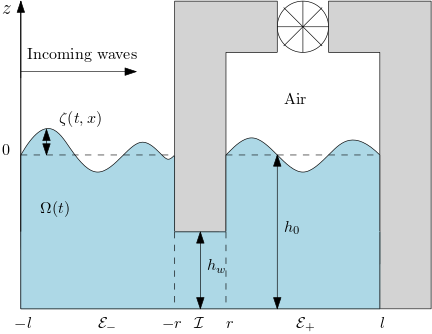}
\caption{Configuration of the oscillating water column device.}
\label{OWC}
\end{figure}
Let us give several notations that will be used throughout the paper.
\subsubsection*{Notation of domains}
\begin{itemize}
    \item[-] We divide the spatial domain $(-l, l)$ into the interior domain and the exterior domain given respectively by
    $$
    \mathcal{I}:= (-r,r)\quad \mbox{ and }\quad
    \mathcal{E}=\mathcal{E}_-\cup \mathcal{E}_+ :=(-l,-r)\cup (r,l).
    $$

    \item[-] We write the time-space domain $\Omega_T:=(0,T)\times \mathcal{E}_+.$
    \end{itemize}
    
\subsubsection*{Functions and constants}
\begin{itemize}
    \item[-] $\zeta(t,x)$ \hspace{0.44cm} Surface elevation 
    \item[-] $\zeta_w$ \hspace{1.06cm} Bottom of the partially-immersed structure
    \item[-] $h(t,x)$\hspace{0.57cm} Fluid height
    \item[-] $h_w$\hspace{1.15cm} Fluid height under the structure
    \item[-] $q(t,x)$\hspace{0.61cm} Horizontal discharge 
    \item[-] $q_i(t)$\hspace{0.9cm} Horizontal discharge in $\mathcal{I}$
    \item[-] $ \underline{P}(t,x)$\hspace{0.51cm} Surface pressure of the fluid
    \item[-] $P_\mathrm{air}(t,x)$\hspace{0.16cm} Air pressure
    \item[-] $P_\mathrm{ch}(t)$\hspace{0.6cm}  Time-dependent variation of  $ P_\mathrm{air}$ inside the OWC chamber
    \item[-] $ P_\mathrm{atm}$\hspace{0.8cm}  Constant atmospheric pressure
    \item[-] $h_0$\hspace{1.23cm} Fluid height at rest in   $\mathcal{E}$
    \item[-] $\dot f$ \hspace{1.25cm} Time derivative of a function  $f$
depending only on $t$
  \item[-] $f^{
(k)}$ \hspace{0.8cm} $k$-th time derivative of a function  $f$
depending only on $t$
  \item[-] $ f(0)$ \hspace{0.77cm} Evaluation of $f$ at $t=0$
\item[-]  $C(\cdot)$ \hspace{0.8cm}  Generic function with number of arguments that may differ from line to line
\end{itemize}

\subsubsection*{Spaces and norms}
\begin{itemize}
     \item[-] For $m\in \N$ and $X= \mathcal{E}_+$ or $\Omega_T$, we denote the norms of $H^m(X)$ and $W^{m,\infty}(X)$ respectively by $\|\cdot\|_{H^m(X)}$ and $\|\cdot\|_{W^{m,\infty}(X)}$.
     \item[-] For $m\in \N$, $\mathbb{W}^m(T)$ is the 
      function space
     defined by
     \begin{equation}\label{defWm}
\mathbb{W}^{m}(T):=\bigcap_{j=0}^{m} C^{j}\left([0, T] ; H^{m-j}
(\mathcal{E}_+)
\right)
\end{equation}
endowed with the norm 
$$
\|u\|_{\mathbb{W}^m(T)}:=\sup _{t \in[0, T]}\vertiii{u(t)}_{m}
, 
\quad \mbox{where}\quad 
\vertiii{u(t)}_{m}=\sum_{k=0}^{m}\|\partial_{t}^{k} u(t,\cdot)\|_{H^{m-k}
  (\mathcal{E}_+)
 }.
$$
Note that $H^{m+1} (\Omega_T) \subsetneq  \mathbb{W}^m (T) \subsetneq H^m (\Omega_T).$
\item[-] For $m\in \N$, we denote the norms of   $H^m(0,T)$ and $W^{m,\infty}(0,T)$ respectively by $|\cdot|_{H^m(0,T)}$ and $|\cdot|_{W^{m,\infty}(0,T)}$.
\item[-]  The trace norm $|u_{|_{x=r}}|_{m,T}$ is defined by
$$
|u_{\left.\right|_{x=r}}|_{m, T}^{2}:=\sum_{k=0}^{m}|(\partial_{x}^{k} u)_{\left.\right|_{x=r}}|_{H^{m-k}(0, T)}^{2}=\sum_{|\alpha| \leq m}|\left(\partial^\alpha u\right)_{\left.\right|_{x=r}}|_{L^{2}(0, T)}^{2},
$$ with $\partial^\alpha:=\partial _t^{\alpha_1} \partial_x^{\alpha_2}$  for $\alpha=(\alpha_1, \alpha_2)$ and $|\alpha|=\alpha_1 + \alpha_2.$ 
Moreover, we use the notation
$$|u_{|_{x=r,l}}|_{m,T}:=|u_{|_{x=r}}|_{m,T}+ |u_{|_{x=l}}|_{m,T}.$$
\item[-] Given $X$ a generic function space
with norm $\| \cdot\|_X$, the compact notation $C(\|u, v\|_X)$ denotes $C(\|u\|_X, \|v\|_X )$.

\end{itemize}

\subsection{Main techniques and novelties.}
We summarize here the equations studied in this article, the techniques used in the analysis and the main results obtained. 

\begin{itemize}
\item  
In our model, the fluid equations are given by 1d nonlinear shallow water equations with a fixed partially-immersed structure and the air pressure is considered to be time-dependent inside the chamber of the device. More precisely, we obtain the following transmission problem related to the fixed partially-immersed structure with vertical-side walls at $x=\pm r$:
\begin{equation}\label{sidewalls-pb}
\begin{cases} 
\partial_t\zeta + \partial_x q=0 
 \\[6pt]
\partial_t q + \partial_x \left(\dfrac{q^2}{h_0+\zeta} \right)+g (h_0+\zeta)\partial_x \zeta=0\end{cases}
\mbox{ in } \quad (0,T)\times\mathcal{E}
\end{equation} 
with  boundary conditions
\begin{equation*}
    \zeta_{|_{x=-l}}=\zeta_{\rm ent}, \qquad q_{|_{x=l}}=0,
\end{equation*}
and transmission conditions
\begin{equation}
\label{trans-sidewalls}
\llbracket q \rrbracket=0, \quad\quad \langle q\rangle =q_i,
\end{equation} 
where $q_i$, $P_\mathrm{ch}$ satisfy
\begin{equation}
\begin{cases}
\displaystyle\frac{d q_i}{dt} = -\frac{1}{\alpha}\left\llbracket g \zeta + \frac{q^2}{2(h_0+\zeta)^2}\right\rrbracket -\frac{1}{\alpha\rho}P_\mathrm{ch},
\\[10pt]
\displaystyle\frac{d P_\mathrm{ch}}{dt} =- \gamma_1 P_\mathrm{ch}+ \gamma_2q_i.
\end{cases}
\end{equation}
The initial conditions are  
\begin{equation}\label{initial}
 \zeta(0,x)=\zeta_0(x), \quad q(0,x)=q_0(x)  \quad \mbox{in}\quad  \mathcal{E}, \qquad 
 \text{and}
 \qquad
q_i(0)=q_{i,0}, \quad P_\mathrm{ch}(0)=P_{\mathrm{ch},0}. 
\end{equation}
The boundary datum $\zeta_{\rm ent}$ is a given time-dependent entry function, $\gamma_1,\gamma_2$ are some positive constants, $\rho$ is the constant fluid density and $\alpha=\frac{2r}{h_w}$ with $h_w = h_0+\zeta_w$. The notations $\llbracket q \rrbracket$ and $\langle q\rangle$ denote respectively the jump and the average of $q$ at $x=\pm r$, namely 
$$\llbracket q \rrbracket:=q_{|_{x=r}}- q_{|_{x=-r}} \quad \mbox{ and }\quad  \langle q\rangle:=\tfrac{1}{2}\left(q_{|_{x=-r}}+q_{|_{x=r}}\right).$$ 
The first novelty is that, to the authors' knowledge, this is the first nonlinear model for the interaction between shallow water waves and an OWC involving a time-dependent air pressure inside the chamber of the device. Adapting the argument used in our previous work \cite{bocchihevergara2021}, we obtain a transmission condition imposing conservation of the total fluid-OWC energy in the non-damped scenario (see Subsection \ref{sec3_deri_second_trans_cond}). The OWC energy is mathematically derived from the structure of the ODE governing the dynamics of the air pressure perturbation inside the chamber.
This derivation improves and generalises the previous nonlinear model derived in \cite{bocchihevergara2021}, as one can recover the same transmission condition in the case of a constant air pressure inside the chamber.\\

\item The second contribution of this article is the following local well-posedness result for the previous transmission problem in the Sobolev setting.

\begin{theorem}\label{theomainresult}
Let $m\geq 2$ be an integer and $(\zeta_0, q_0)\in H^m(\mathcal{E})$ be such that Assumption \ref{assOWC} holds. Suppose that $(\zeta_0, q_0)$,  $(q_{i,0}, P_\mathrm{ch,0})\in\R^2$ and $\zeta_{\rm ent}\in H^m(0,T)$ satisfy the compatibility conditions up to order $m-1$. Then there exists $0<T_1\leq T$ and a unique solution $(\zeta, q, q_i, P_\mathrm{ch})$  to
\eqref{sidewalls-pb}-\eqref{initial} 
 with $(\zeta, q)\in \mathbb{W}^m(T_1)$ and $(q_i,P_\mathrm{ch})\in H^{m+1}(0,T_1)$,  where $\mathbb{W}^m(T_1)$ denotes the same space as in \eqref{defWm} but defined in the spatial domain $\mathcal{E}$.   Moreover, $|(\zeta,q)_{|_{x=\pm r, \pm l}}|_{m,T_1}$ is finite.
\end{theorem}

To the best of our knowledge, this represents the first well-posedness result in the Sobolev setting of a nonlinear model for the interaction between waves and the OWC. It is achieved by reformulating \eqref{sidewalls-pb}-\eqref{trans-sidewalls} as a one-dimensional $4\times 4$ hyperbolic quasilinear initial boundary value problem with a semilinear boundary condition, i.e.
\begin{equation}
		\begin{aligned}
		\begin{cases}
		\partial_t u + \mathcal{A}(u)\partial_x u = 0 &\quad\mbox{in}\quad  (0,T)\times 
	 \mathcal{E}_+
		,\\[5pt]
		u(0)=u_0(x) &\quad\mbox{on}\quad \mathcal{E}_+,\\[5pt]
		\mathcal{M}_r u_{
		|_{x=r}
		}=V(G(t)) &\quad\mbox{on}\quad  (0,T),\\[5pt]
		\mathcal{M}_l u_{|_{x=l}}=g(t) &\quad\mbox{on}\quad  (0,T),
		\end{cases}
		\end{aligned}
		\label{PDE}
		\end{equation} 
	where $u,u_0$ are $\mathbb{R}^4$-valued functions, $\mathcal{A}(u)$, $\mathcal{M}_r$ and $\mathcal{M}_l$ are respectively $4\times 4$, $2\times4$ and $2\times4$ real-valued matrices, $V $ and $g$ are $\mathbb{R}^2$-valued functions and $G$ is a $\mathbb{R}^2$-valued function satisfying the following equation	
\begin{equation}
		\begin{cases}
		\dot{G}= \Theta(G, u_{
		 |_{x=r}
		} ) \quad \mbox{in} \quad (0,T),\\[5pt]
		G(0)=G_0,
		\end{cases}
		\label{ODE}
\end{equation}with $\Theta: \mathbb{R}^2\times \mathbb{R}^4 \rightarrow \mathbb{R}^2$. See Section \ref{Section 3: derivation_model} for their explicit expressions.
We take advantage of the one-dimensional setting to construct an explicit Kreiss symmetrizer. This is done by adding two weight functions, one larger enough than the other one at $x=r$ and viceversa at $x=l$, in the expression of the symmetrizers in \cite{bocchi2020floating,IguLan21}. This new adjustment permits to handle the two boundaries of the domain (contrarily to only one boundary in the half-line case in \cite{bocchi2020floating,IguLan21}). Then, the assumption of an equivalent version of the so-called uniform Kreiss-Lopatinski\u{i} condition makes the two boundary conditions dissipative.  Roughly speaking, this property allows us to control the traces $u_{|_{x=r,l}}$ at the same regularity as $u$, without loss of derivatives. 
We notice that the minimal regularity index obtained in Theorem \ref{theomainresult}, that is $m=2$, corresponds to the standard minimal regularity integer index $m>d/2 +1$ for one-dimensional quasilinear initial value problems.\\
The proof is based on the study of the linearized ``PDE system" and an iterative scheme for the coupled ``PDE-ODE system". 
As usually done for initial boundary value problems, we prove the boundedness of a sequence of approximated solutions in some ``high norm" and its convergence in some ``low norm" (see  \cite{benzoni-serre2007, coron_2007_book}) by using linear high order Sobolev estimates for the PDE together with nonlinear high order Sobolev estimates for the ODE. 
While PDE’s estimates were already derived in \cite{IguLan21}, the difficulty of our proof arises from the fact that the boundary data is not given but determined by an evolution equation depending on the trace of the solution at the boundary.
To handle this, we derive estimates for the iterative ODE involving the norm of $u_{|_{x=r}}$ controlled via the linear estimates and, moreover, with a time factor that goes to zero as the existence-time $T$ goes to zero.
Indeed, this time-dependence together with the choice of a small $T$, is crucial to close the iterative argument that gives both boundedness and convergence.
The limit of the sequence is then the solution $(u,G)$ to \eqref{PDE}-\eqref{ODE} and its uniqueness and regularity follow by standard arguments.
\end{itemize}
\subsection{Organization of the paper}
The outline of the article is as follows. We present in Section \ref{Section 2} the nonlinear mathematical model of an oscillating water column in the shallow water regime. In Subsection \ref{section2: fluid_eqs} we first introduce the different domains involved in the model and  present the one-dimensional nonlinear shallow water equations in the presence of a partially-immersed structure. 
After showing the duality property of constraints and unknowns, we split the equations into two different systems corresponding respectively to the region where the fluid surface is free  and the region under the structure where the surface is constrained. Moreover, boundary conditions are given to complete the model. Subsection \ref{sec2_air_pres_dyn} is devoted to the air pressure dynamics. We assume that the air pressure is equal to the constant atmospheric pressure outside the chamber and we consider it as a time-dependent variation of the atmospheric pressure inside the chamber. We explicitly give the evolution equation of the air pressure variation and rewrite it in terms of the horizontal discharge under the partially-immersed structure.\\
In Section \ref{Section 3: derivation_model} we reformulate the model as a transmission problem. In Subsection \ref{sec3_trans_strucure} we distinguish the equations in the region before the structure, and after the structure, which is the domain inside the chamber. The continuity of the horizontal discharge at the side-walls gives one transmission condition. However, due to the lack of continuity for the surface elevation at the side-walls, one additional condition is necessary to close the system and guarantee the well-posedness of this problem. Therefore in Subsection \ref{sec3_deri_second_trans_cond} we derive a second transmission condition imposing the conservation of the total fluid-OWC
energy in the non-damped scenario. The new transmission condition takes into account the time-dependent variation of the air pressure inside the chamber. 
We show in Subsection \ref{sec3_transmiss_pron_as_IBVP} that the transmission problem can be recast as a $4\times 4$ initial boundary value problem with a semilinear boundary condition.
In Section \ref{KreissHypIBVP} we investigate the well-posedness for general quasilinear hyperbolic IBVPs with a semilinear boundary condition. In Subsection \ref{subsection_linearsystem}, we first present the well-posedness theory of Kreiss-symmetrizable linear hyperbolic IBVP with variable coefficients and given boundary data. This was treated in \cite{IguLan21} in the half-line case and here we adapt it to the bounded interval case. 
More precisely, we construct a \textit{Kreiss symmetrizer} adding two weights functions in the expression of the symmetrizers of \cite{bocchi2020floating,IguLan21} in order to handle both boundaries of the domain. Afterwards, we introduce the notions of \textit{uniform Kreiss-Lopatinski\u{i}} and compatibility conditions, which are necessary for higher order \textit{a priori} estimates and the well-posedness of the linear IBVP stated in Theorem \ref{thm_Iguchi_lannes}. 
In Subsection \ref{subsection_nonlinear} we present some Moser-type nonlinear estimates that we repetitively use in the proof of the well-posedness theorem for the quasilinear IBVP. In Subsection \ref{subsection_ODE}, we establish some required nonlinear estimates for the ODE that determines the boundary condition in the IBVP. Using linear estimates for PDE and nonlinear estimates for ODE, in Subsection \ref{subsection_IBVP} we construct a solution to the quasilinear hyperbolic IBVP with a semilinear boundary condition by an iterative argument. In fact, the obtained solution is the limit of the sequence of approximated solutions to the coupled PDE-ODE system. 
In Subsection \ref{subsection_well-posedness} the well-posedness of the original problem is finally obtained as an application of the general theory.

\section{Derivation of the model}\label{Section 2}
\subsection{Fluid equations}\label{section2: fluid_eqs}
We consider an incompressible, irrotational, inviscid and homogeneous fluid that interacts  with an on-shore oscillating water column device in a shallow water regime. This means that characteristic fluid height is small with respect to the characteristic horizontal scale in the longitudinal direction.  Let us denote by $\zeta(t,x)$ the surface elevation, which is assumed to be a graph, and by $-h_0$ (with $h_0 >0$) the parametrization of the flat bottom. The two-dimensional fluid domain is $$\Omega(t)=\{(x,z)\in (-l, l)\times\R \ : -h_0<z< \zeta(t,x)\}.$$ 
The partially-immersed structure is centered at $x=0$, with horizontal length $2r$ and vertical walls located at $x=\pm r$. Its presence permits to divide the horizontal projection of the fluid domain into two domains: the \textit{exterior} domain $(-l, -r)\cup (r , l)$, where the 
water surface
is not in contact with the structure, and the \textit{interior} domain $(-r,r)$, where the contact occurs. We denote them by $\mathcal{E}$ and $\mathcal{I}$, respectively. Furthermore, later in the analysis we will need to distinguish the part of $\mathcal{E}$ outside the chamber and inside the chamber. Hence we denote by $\mathcal{E}_-$ and $\mathcal{E}_+$ the subsets $(-l,-r)$ and $(r,l)$, respectively.\\
The horizontal discharge $q(t,x)$ is defined by
\begin{equation*}
q(t,x) = 
\displaystyle\int_{-h_0}^{\zeta(t,x)} u(t,x,z)dz  \quad \mbox{for} \quad (t,x)\in(0,T)\times (-l,l),
\end{equation*}   
where $u(t,x,z)$ is the horizontal component of the fluid velocity. It follows that $q=h\overline{u}$ where $\overline{u}(t,x)$ is the vertically averaged horizontal fluid velocity and $h(t,x)=h_0 + \zeta(t,x)$ is the fluid height. After integrating over the fluid height the horizontal component of the free surface Euler equations, adimensionalizing the equations and truncating at precision $O(\mu)$, where $\mu$ is the shallowness parameter, one can obtain the nonlinear shallow water equations in the presence of a structure. We refer to \cite{lannes2017floating, lannes2020modeling} for the derivation of the equations in the  multi-dimensional case and \cite{bocchi2020floating} in the two-dimensional axisymmetric with no swirl case. 
Here we consider the one-dimensional nonlinear shallow water equations in the presence of a partially-immersed structure:
\begin{equation}\label{1dshallow-structure}
    \begin{cases}
    \partial_t\zeta + \partial_x q=0,\\[5pt]
    \partial_t q + \partial_x \left(\dfrac{q^2}{h} \right)+g h\partial_x \zeta=-\dfrac{h}{\rho}\partial_x \underline{P},
    \end{cases}\quad \mbox{ in } \quad (0,T)\times(-l,l),
\end{equation}
where $\underline{P}(t,x)$ is the surface pressure of the fluid, $g$ is the gravitational constant and $\rho$ is the constant fluid density.
In this paper viscous effects are not taken into account. However, some numerical-based models considered in the wave-energy community (for instance \cite{wang2018nonlinear}) showed that viscosity effects should be included to have a good agreement with experimental data. We also refer to \cite{mai-sm-taka-tuc19} for more about viscous shallow water model for a floating solid where the authors were able to obtain a global well-posedness result due to the viscosity term.
Moreover, we do not include capillary effects since in the characteristic scale of the problem they are negligible.
Indeed, we assume continuity of the surface pressure with the air pressure outside the fluid domain. In general, the air pressure is taken equal to the constant (both in time and space) atmospheric pressure. 
In a first and simpler approach, the authors modelled the oscillating water column device in \cite{bocchihevergara2021} with a constant air pressure, both outside and inside the chamber. A novelty of this work is that we consider an air pressure function which is not constant through all the domain. Indeed, while outside the chamber it is reasonable to consider a constant air pressure, inside the chamber the motion of the waves produce variations of the air pressure and this fact must be taken into consideration to describe more precisely the behaviour of a wave energy converter of this type.\\
Let us now talk about the partially-immersed structure. We assume that the bottom of the structure can be parametrized as graph of a function $\zeta_w$ and for the sake of simplicity we consider a solid with a flat bottom, yielding $\zeta_w=\zeta_w(t)$. We remark that the same theory holds in the case of objects with non-flat bottom. The fact that in an oscillating water column device the partially-immersed structure is fixed implies that $\zeta_w$ is a constant of the problem both in space and time. Dealing with floating structures leads to consider a time-dependent function $\zeta_w$ related to the velocity of the moving object (see \cite{bocchi2020floating,lannes2017floating} for nonlinear shallow water equations, \cite{lannesbeck2021boussinesq} for Boussinesq equations).\\

\paragraph{\textbf{Constraints and unknowns}}The interaction between floating or fixed structures and water waves, inherits a duality property. On the one hand, in the exterior domain, the surface pressure is constrained to be equal the air pressure while the surface elevation is free, i.e.
\begin{equation}
    \label{cons-unk_ext}\begin{cases}
    \underline{P}(t,x)= P_\mathrm{air}(t,x),\\
    \zeta(t,x) \mbox{ is unknown},
    \end{cases}
    \quad \mbox{for} \quad (t,x)\in(0,T)\times \mathcal{E},
\end{equation}
where $P_\mathrm{air}(t,x)$ is the known air pressure function. On the other hand, in the interior domain, the surface elevation matches the bottom of the solid while the surface pressure is free, i.e.
\begin{equation}
    \label{cons-unk_int}
    \begin{cases}
    \zeta(t,x)=\zeta_w,\\
    \underline{P}(t,x) \mbox{ is unknown},
    \end{cases}
    \quad \mbox{for} \quad (t,x)\in (0,T)\times \mathcal{I}.
\end{equation}
It has been shown in \cite{lannes2017floating} that the pressure $\underline{P}$ in the interior domain can be seen as a Lagrange multiplier associated with the contact constraint $ \zeta(t,x)=\zeta_w$ (it holds also for the water waves equations in the presence of a floating structure). Injecting \eqref{cons-unk_ext}-\eqref{cons-unk_int} into \eqref{1dshallow-structure}, we obtain the following two systems

\begin{equation}\label{1dshallow-ext-as}
    \begin{cases}
    \partial_t\zeta + \partial_x q=0,\\[5pt]
    \partial_t q + \partial_x \left(\dfrac{q^2}{h_0+\zeta} \right)+g (h_0+\zeta)\partial_x \zeta=-\dfrac{h_0+\zeta}{\rho}\partial_x P_\mathrm{air},
    \end{cases}\quad \mbox{ in } \quad (0,T)\times\mathcal{E},
\end{equation}
and
\begin{equation}\label{1dshallow-int}
    \begin{cases}
    q=q_i(t),\\[5pt]
    \dfrac{d q_i}{dt}=-\dfrac{h_w}{\rho}\partial_x \underline{P}, 
    \end{cases}\quad \mbox{ in } \quad (0,T)\times\mathcal{I}, 
\end{equation}
where $q_i$ is a time-dependent function that coincides with the horizontal discharge in the interior domain. 
Notice that the first equation in \eqref{1dshallow-int} comes from the continuity equation $\partial_t \zeta + \partial_x q =0$ together with constraint \eqref{cons-unk_int} in the interior domain.

\paragraph{\textbf{Boundary conditions}}
Let us discuss here the boundary conditions that couple with \eqref{1dshallow-ext-as}-\eqref{1dshallow-int}. 
As in \cite{bocchihevergara2021,lannes2020generating} we deal with a left boundary at $x=-l$ and the boundary condition reads
$$\zeta_{|_{x=-l}}=\zeta_{\rm ent},$$
where $\zeta_{\rm ent}= \zeta_{\rm ent}(t)$ is a given time-dependent entry function. This is necessary when dealing with numerical applications and $\zeta_{\rm ent}$ can be determined from experimental data. Indeed, during experiments in wave tanks it is usual to create waves with a lateral piston that permits to know the exact entry value of the surface elevation at any given time. Moreover,  in \cite{lannes2020generating} the authors showed that the knowledge of the entry value of the surface elevation allows to get the entry value of the horizontal discharge using the existence of Riemann invariants for the 1d nonlinear shallow water equations.\\
At the vertical walls of the partially-immersed structure we consider the slip condition for the fluid velocity. Moreover, since the fluid is irrotational, we know that the fluid velocity is continuous in the interior of $\Omega(t)$ from the elliptic regularity of the velocity potential. Combining these two facts, the continuity of the horizontal discharge at the walls follows (see more details in \cite{lannes2017floating}). Of course, since the structure have vertical walls, the continuity of the surface elevation at the solid walls and of the surface pressure fails (this would not be the case for instance in the case of a boat, see \cite{IguLan21,lannes2017floating}). Thus, we have
\begin{equation}
    \label{qcon-walls}
    q_{|_{x=(\pm r)^+}}=q_{|_{x=(\pm r)^-}}.
\end{equation}
We will see in the next section how to supply the lack of continuity for  both the pressure and the surface elevation at the structure walls and derive a condition which will close the system. Finally,  at the end of the chamber we consider a solid wall condition, that is
\begin{equation}
\label{wallcondition}
    q_{|_{x=l}}=0.
\end{equation}

\subsection{Air pressure dynamics}\label{sec2_air_pres_dyn}
In this subsection we focus on the air pressure, which is not in general a constant function. In particular, we distinguish the cases of the air outside the chamber and inside the chamber. On the one hand, in $\mathcal{E}_-$ the variations of the air pressure are negligible and it can be considered equal to the constant atmospheric 
pressure, 
\textit{i.e.}
\begin{equation}
    \label{constant-air-pressure}
    P_\mathrm{air}(t,x)= P_\mathrm{atm}\quad \mbox{for} \quad (t,x)\in(0,T)\times\mathcal{E}_-.
\end{equation}
On the other hand, in $\mathcal{E}_+$, where the air is partially trapped inside the chamber and pushed by the waves motion, a constant air pressure is no more realistic. 
We can reasonably assume that the air pressure inside the chamber is uniform in space. 
Therefore we deal with a time-dependent air pressure function and in particular we write it as a variation of the atmospheric pressure, \textit{i.e.}
\begin{equation}
    \label{time-dep-air-pressure}
    P_\mathrm{air}(t,x)= P_\mathrm{atm} + P_\mathrm{ch}(t) \quad \mbox{for} \quad (t,x)\in(0,T)\times \mathcal{E}_+,
\end{equation}
where $P_\mathrm{ch}(t)$ is the time-dependent variation. With this type of hypothesis on the air pressure inside the chamber, it is possible to find in ocean engineering literature an evolution equation governing the dynamics of the pressure variation $P_\mathrm{ch}(t)$. For instance, we refer to \cite{Dimakopoulos-Cooker-Bruce2017,fanhelgato16air}. It is derived for oscillating water column with Wells turbines \cite{raghunathan1995wells}, for which the relation between the pressure drop and the velocity of the air in the resistance layer is linear. Assuming this characteristics of the device, we have that $P_\mathrm{ch}$ satisfies the following linear ODE:
\begin{equation}
    \label{Pch-dynamics}
    \frac{d P_\mathrm{ch}}{dt} + \frac{\gamma P_\mathrm{atm}}{h_\mathrm{ch} K} P_\mathrm{ch}= \frac{\gamma P_\mathrm{atm}}{h_\mathrm{ch}}\frac{d\overline{\zeta}}{dt},
\end{equation}
where $\gamma$ is the polytropic expansion index of the air ($\gamma=1.4$), $h_\mathrm{ch}$ is the height of the chamber and $K$ is a resistance parameter. 
Despite these known parameters of the device, the spatially averaged free surface elevation $\overline{\zeta}$ over $\mathcal{E}_+$ remains unknown. In general in ocean engineering and marine energy literature, authors determine this value from experimental data calculated by gauges located inside the chamber. In our analytic approach, we rewrite it in terms of the horizontal discharge at the entrance of the chamber, that is at $x=r^+$. Indeed, using the continuity equation in \eqref{1dshallow-ext-as} we have 
\begin{equation*}
    \frac{d\overline{\zeta}}{dt}=\frac{d}{dt}\left(\frac{1}{|\mathcal{E}_+|}\int_{\mathcal{E}_+} \zeta(t,x)dx\right)= \frac{1}{|\mathcal{E}_+|}\int_{\mathcal{E}_+} \partial_t \zeta(t,x)dx = \frac{q_{|_{x=r^+}}}{|\mathcal{E}_+|}=\frac{q_i}{|\mathcal{E}_+|},
\end{equation*} 
where in the last two equalities we have used the wall condition \eqref{wallcondition} and the continuity condition \eqref{qcon-walls} for the horizontal discharge. Therefore \eqref{Pch-dynamics} reads
\begin{equation}\label{Pchamber-dynamics}
     \frac{d P_\mathrm{ch}}{dt} +\gamma_1 P_\mathrm{ch} = \gamma_2 q_i,
\end{equation}
where $\gamma_1$ and $\gamma_2$ are constants depending on the device parameters as in \eqref{Pch-dynamics}. Note that $\gamma_2 > 0$ ensures transmission while $\gamma_1 > 0$ tends to zero as the height of the chamber or some resistance of the device increases. For later purpose,  let us define the \textit{non-damped} scenario when $\gamma_1$ is negligible.
The previous equation \eqref{Pchamber-dynamics} shows that the dynamics of the air pressure variation inside the chamber is determined by the horizontal discharge $q_i$ under the partially-immersed structure.

\section{Reformulation of the model as a transmission problem}\label{Section 3: derivation_model}
This section is devoted to the reformulation of the model that we have previously derived.
More precisely, we show that \eqref{1dshallow-ext-as}-\eqref{1dshallow-int} can be written as a transmission problem 
across the structure side-walls and we recast it as a $4\times 4$ initial boundary value problem (IBVP).

\subsection{Transmission problem across the structure side-walls}\label{sec3_trans_strucure}
The transmission problem we derive here is associated with the wave-structure interaction at the vertical side-walls of the partially-immersed object. From \eqref{constant-air-pressure}-\eqref{time-dep-air-pressure} the air pressure is independent of the spatial variable both inside and outside the chamber.
Therefore, \eqref{1dshallow-ext-as} can be written as
\begin{equation}\label{1dshallow-Epluslleft}
    \begin{cases}
    \partial_t\zeta + \partial_x q=0,\\[4pt]
    \partial_t q + \partial_x \left(\dfrac{q^2}{h_0+\zeta} \right)+g (h_0+\zeta)\partial_x \zeta=0,\\[4pt]
    \end{cases}\quad \mbox{ in } \quad (0,T)\times\mathcal{E} ,
\end{equation}
 with transmission condition 
\begin{equation*}
    \qquad q_{|_{x=-r}}=q_{|_{x=r}},
\end{equation*}
and boundary conditions 
\begin{equation}\label{boundary-cond}
    \zeta_{|_{x=-l}}=\zeta_{\rm ent}, \qquad q_{|_{x=l}}=0.
\end{equation}
Moreover,  in the interior domain one has
\begin{equation}\label{pb-int}
\dfrac{d q_i}{dt}=-\dfrac{h_w}{\rho}\partial_x \underline{P} \quad \mbox{ in } \quad (0,T)\times\mathcal{I}.
\end{equation}
\begin{remark}
In \eqref{pb-int} we have implicitly used the fact that the bottom of the partially-immersed structure is flat, yielding that $\zeta_w$ is constant in space as well. More generally, 
for a solid with non-flat bottom parametrization $\zeta_w(x)$ the evolution equation for $q_i$ would read
\begin{equation*}
     \dfrac{d q_i}{dt} - q_i^2 \frac{\partial_x\zeta_w}{h^2_w}+ g h_w \partial_x \zeta_w=-\dfrac{h_w}{\rho}\partial_x \underline{P},
\end{equation*}
with $h_w(x)=h_0+\zeta_w(x).$
\end{remark}

We will see later that, after making a change of variables, the $2\times 2$ transmission problem \eqref{1dshallow-Epluslleft}-\eqref{boundary-cond} can be recast as a $4\times 4$ hyperbolic quasilinear initial boundary value problem (IBVP). It is known that a necessary condition to ensure the well-posedness of this type of problems is that the number of boundary conditions must be equal to the number of positive eigenvalues of the system (see
\cite[Section 1.1]{bastin-coron2016}, \cite{benzoni-serre2007}). In our case we will have two positive eigenvalues, the positive eigenvalue of 
$A(U)$
in $\mathcal{E}_+$ and the opposite of the negative eigenvalue of $A
(U)$ in $\mathcal{E}_-$. Unfortunately, the continuity of $q$ across the side-walls only gives us one transmission condition and an additional transmission condition is indispensable. This will be derived in the next subsection. 

\subsection{Derivation of the second transmission condition}\label{sec3_deri_second_trans_cond}
In the case of a boat, as in \cite{IguLan21}, the partially-immersed structure has non-vertical lateral walls and the second transmission is determined by the continuity of the surface elevation at the contact points where the waves, the air and the solid meet. Contrarily, in the presence of vertical side-walls, which is the case considered in this paper, the continuity of the surface elevation ceases to hold. 
However, from \eqref{pb-int} we know that the horizontal discharge  $q$ in the interior domain is equal to $q_i$ that depends only on time. Therefore the second transmission condition reads 
$q_{|_{x=\pm r}}=q_i$ or
equivalently 
\begin{equation*}
    \label{average-transcond}
    \langle q\rangle =q_i \quad \mbox{with} \quad \langle q\rangle:=\tfrac{1}{2}(q_{|_{x=-r}}+q_{|_{x=r}}).
\end{equation*} 
When the air pressure is assumed to be constant both outside and inside the chamber, the fluid-structure system can be assumed to be isolated, yielding that the total fluid-structure energy is a conserved quantity. Then, using local conservation of energy derived from the equations, one obtains an evolution equation on $q_i$ depending on the traces of the $\zeta$ and $q$ at both side walls. This has been done in \cite{bocchihevergara2021} for the nonlinear shallow water equations and in \cite{bresch2019waves} for the Boussinesq system. 
Following the same approach, we want to derive an evolution equation for $q_i$ that completely determines it and permits to close the system. Let us recall for the sake of clarity the fluid equations we are studying:
\begin{equation}\label{exteqs-as1}
    \begin{cases}
    \partial_t\zeta + \partial_x q=0,\\[5pt]
    \partial_t q + \partial_x \left(\dfrac{q^2}{h_0+\zeta} \right)+g (h_0+\zeta)\partial_x \zeta=-\dfrac{h_0+\zeta}{\rho}\partial_x P_\mathrm{air},\\[5pt]
    \end{cases}\quad \mbox{ in } \quad (0,T)\times \mathcal{E}.
\end{equation}
Notice that the source term in the second equation of \eqref{exteqs-as1} vanishes since $P_{\rm air}$ does not depend on the spatial variable, but for our analysis it is crucial to keep that term explicit.
Multiplying the first equation in \eqref{exteqs-as1} by $\rho g \zeta$ and the second equation by $\rho \frac{q}{h_0 + \zeta}$ we obtain 
\begin{equation*}
\partial_t \mathfrak{e}_{\rm ext} + \partial_x \mathfrak{f}_{\rm ext}
= P_{\mathrm{air}}   \partial_x q \qquad \mbox{ in } \quad (0,T)\times \mathcal{E},
\end{equation*}
where $\mathfrak{e}_{\rm ext}$ and $\mathfrak{f}_{\rm ext}$ are the local fluid energy and the local flux in the exterior domain respectively defined by 
\begin{equation*}
	\mathfrak{e}_{\rm ext}= \rho\frac{q^2}{2h} + g\rho \frac{\zeta^2}{2} \quad \mbox{ and }\quad  \mathfrak{f}_{\rm ext}=q\left(\rho\frac{q^2}{2h^2} + g\rho\zeta  +
P_{\text{air}}
	\right).
\end{equation*}
Next, in the interior domain the equations read
\begin{equation}\label{eqs-int1}
 \dfrac{d q_i}{dt}=-\dfrac{h_w}{\rho}\partial_x \underline{P} \quad \mbox{ in } \quad (0,T)\times\mathcal{I}.
\end{equation}
Multiplying the equation above by $\rho \dfrac{q_i}{h_w}$, we obtain the local conservation of energy in the interior domain 
\begin{equation*}\label{intcon1}
\partial_t \mathfrak{e}_{\rm int} + \partial_x \mathfrak{f}_{\rm int}=0,
\end{equation*}
where $\mathfrak{e}_{\rm int}$ and $\mathfrak{f}_{\rm int}$ are the local fluid energy and the local flux in the interior domain respectively defined by 
\begin{equation*}
\mathfrak{e}_{\rm int}=  \rho \frac{q_i^2}{2h_w} + \rho g \frac{\zeta_w^2}{2} \quad \mbox{ and }\quad  \mathfrak{f}_{\rm int}=q_i \underline{P} . 
\end{equation*}
Notice that we have used the fact that $\partial_t \zeta_w=0$ since the structure is fixed. 
Let us define the global fluid energy by $$E_{\mathrm{fluid}}= \int_{\mathcal{I}} \mathfrak{e}_{\rm int} + 
	\int_{\mathcal{E}} \mathfrak{e}_{\rm ext}.$$
Therefore, denoting the jump $\llbracket f \rrbracket := f_{|_{x=r^+}}- f_{|_{x=(-r)^-}},$ we compute that
	\begin{equation}\label{der-fsenergy}
	\begin{aligned}
	 \frac{d}{dt}{E}_{\mathrm{fluid}} 
	& = \int_{\mathcal{I}} \partial_t \mathfrak{e}_{\rm int} + 
	\int_{\mathcal{E}} \partial_t\mathfrak{e}_{\rm ext}
	\\[5pt]&= - \llbracket \mathfrak{f}_{\rm int} \rrbracket  + \llbracket
	\mathfrak{f}_{\rm ext} \rrbracket - \left(\mathfrak{f}_{\rm ext} \right)_{|_{x=l}} + 
	\left(\mathfrak{f}_{\rm ext}\right)_{|_{x=-l}} + (P_{\rm air}q)_{|_{x=l}} -(P_{\rm air}q)_{|_{x=-l}} - 
	\llbracket P_{\mathrm{air}} q \rrbracket
	\\[5pt]&= - \llbracket \mathfrak{f}_{\rm int} \rrbracket  + \llbracket \mathfrak{f}_{\rm ext} \rrbracket + \rho\left(q\big(\tfrac{q^2}{2h^2} + g\zeta\big)\right)_{|_{x=-l}} - P_{\mathrm{ch}} q_i,
	\end{aligned}
	\end{equation}
	where in the second equality we have used that $P_{\rm air}$ is constant in space and in the third equality we have used the wall boundary condition $q_{|_{x=l}}=0$, the fact that $q_{|_{x=\pm r}}=q_i$ and $\llbracket P_{\mathrm{air}} \rrbracket = P_{\rm ch} $ by definition of $P_{\rm air}$ in  $\mathcal{E}_-$ and in $   
    \mathcal{E}_+$.
    Notice that in the right-hand side of the equation above there is a term involving the air pressure variation $P_\mathrm{ch}$ inside the chamber of the OWC, whose information cannot be obtained from the fluid equations but is determined by \eqref{Pchamber-dynamics}. One can see that this ODE has an intrinsic energy $\frac{1}{2} P^2_{\rm ch}$. The second term in the left hand side of \eqref{Pchamber-dynamics} can be interpreted as a damping. Our goal is to derive a transmission condition for the transmission problem by imposing the conservation of a certain characteristic energy of the fluid-OWC coupled problem. This way of coupling physical subsystems using a power conserving interconnection can be also thought as the formulation of port-Hamiltonian systems. We refer the interested reader to \cite{Rashad-Califano-vanderSchaft_2020} for the general formulation and to \cite{vanderschaft-maschke_2002} for its approach to PDEs.\\
    Let us consider the case when no dissipation occurs in the OWC chamber and the damping term in \eqref{Pchamber-dynamics} is negligible. In this non-damped scenario, it is reasonable to ask for conservation of the total 
    fluid-OWC
    energy. We then consider the non-damped version of \eqref{Pchamber-dynamics}, namely,
     \begin{equation}\label{Pchamber-dynamics-nodamp}
     \frac{d P_\mathrm{ch}}{dt} = \gamma_2 q_i,
\end{equation}and multiplying by $P_{\rm ch }$ yields 
\begin{equation*}
     \frac{1}{2\gamma_2}\frac{d P^2_\mathrm{ch}}{dt} = P_{\rm ch} q_i. \vspace{0.5em}
\end{equation*}
Injecting the previous equality into \eqref{der-fsenergy} and defining the OWC energy\footnote[1]{Using the definition of the physical parameter $\gamma_2$, it is easy to check that the introduced quantity $E_{\rm OWC}$ is indeed homogeneous to an energy.
} $E_{\rm OWC}$ by
$$E_{\rm OWC}=\frac{1}{2\gamma_2}P^2_{\rm ch},$$
we obtain 
\begin{equation*}
   \frac{d}{dt}({E}_{\mathrm{fluid}}+ E_{\rm OWC}) =- \llbracket \mathfrak{f}_{\rm int} \rrbracket  + \llbracket \mathfrak{f}_{\rm ext} \rrbracket + \rho\left(q\big(\tfrac{q^2}{2h^2} + g\zeta\big)\right)_{|_{x=-l}}.
\end{equation*}
Now we impose that total energy 
${E}_{\mathrm{fluid}}
+ E_{\rm OWC}$
is a conserved quantity of the problem, which is defined in a bounded domain. Hence we assume that \begin{equation}\label{conserv-assum}
   \frac{d}{dt}({E}_{\mathrm{fluid}}+ E_{\rm OWC}) = \rho\left(q\big(\tfrac{q^2}{2h^2} + g\zeta\big)\right)_{|_{x=-l}}.
\end{equation}
 This is an adaptation of the conservation of total fluid-OWC energy to a bounded domain case, where the term in the right-hand side is the fluid flux at the entrance of the domain (equal to the one in \cite{bocchihevergara2021}). The wall boundary condition makes the fluid flux vanish at the end of the domain.\\
 With this assumption, we get 
 \begin{equation*}
	\llbracket \mathfrak{f}_{\rm int} \rrbracket 
	=
	\llbracket \mathfrak{f}_{\rm ext} \rrbracket.
\end{equation*}
By definition of the fluxes it follows
\begin{equation*}
\llbracket q_i \underline{P}   \rrbracket = \left\llbracket q(\rho\frac{q^2}{2h^2} + g\rho\zeta  +  P_{\mathrm{air}} ) \right\rrbracket .
\end{equation*}
Then, using again that $q_{|_{\pm r}}=q_i$, $\llbracket P_{\mathrm{air}}\rrbracket=P_{\rm ch}$, we derive from \eqref{eqs-int1} the following ODE for $q_i$:
\begin{equation}
\label{ODEqi}
   -\alpha\frac{d q_i}{dt} = \left\llbracket g \zeta + \frac{q^2}{2(h_0+\zeta)^2}\right\rrbracket +\frac{P_\mathrm{ch}}{\rho}
\end{equation}
with $\alpha=\frac{2r}{h_w}$, where $2r=|\mathcal{I}|$ is the width of the partially-immersed structure.
\begin{remark}
As previously explained, our goal is to derive a transmission condition that allows to close the system in the case of a partially-immersed structure with vertical side-walls. The ODE for $q_i$ was derived by considering the non-damped version \eqref{Pchamber-dynamics-nodamp} of the original ODE \eqref{Pchamber-dynamics} and by assuming the existence of a reasonable conserved quantity in that particular case. However, the derivation of the transmission condition is independent of the effective conservation of the total fluid-OWC energy in the real scenario, where damping occurs. Indeed, after having obtained the condition $\llbracket \mathfrak{f}_{\rm int} \rrbracket =	\llbracket \mathfrak{f}_{\rm ext} \rrbracket$, one should consider the original ODE \eqref{Pchamber-dynamics}. Then, instead of \eqref{conserv-assum}, it would yield
\begin{equation*}
   \frac{d}{dt}({E}_{\mathrm{fluid}}+ E_{\rm OWC}) = -\frac{\gamma_1}{\gamma_2} P^2_{\rm ch} + \rho \left(q\big(\tfrac{q^2}{2h^2} + g\zeta\big)\right)_{|_{x=-l}},
\end{equation*}
which shows dissipation of the considered energy. The dissipated energy is crucial for the  good implementation of the wave energy converter as it is captured by the device and transformed via the turbine into electric energy.
\end{remark}

\begin{remark}
The ODE \eqref{ODEqi} is a generalization of the one derived in \cite{bocchihevergara2021} by the authors. Indeed, considering the air pressure equal to the constant atmospheric pressure also inside the chamber, one has $P_\mathrm{ch}\equiv 0$ and the same equation as in \cite{bocchihevergara2021} is recovered.
\end{remark}
\noindent Then the transmission problem  \eqref{1dshallow-Epluslleft}-\eqref{boundary-cond} reads
				\begin{equation}\label{transproblemcomplete}
    \begin{cases}
    \partial_t\zeta + \partial_x q=0 \\[4pt]
    \partial_t q + \partial_x \left(\dfrac{q^2}{h_0+\zeta} \right)+g (h_0+\zeta)\partial_x \zeta=0
    \end{cases} \mbox{ in } \quad (0,T)\times\mathcal{E},\\[4pt]
\end{equation}
with  boundary conditions
\begin{equation*}
    \zeta_{|_{x=-l}}=\zeta_{\rm ent}(t), \qquad q_{|_{x=l}}=0
\end{equation*}
and transmission conditions	
\begin{equation*}
\llbracket q \rrbracket=0, \quad \langle q\rangle=q_i,
\end{equation*}	
where $q_i$, $P_\mathrm{ch}$ satisfy
\begin{equation}\label{equation_P_q}
\begin{cases}
\begin{aligned}
 &\frac{d q_i}{dt} = -\frac{1}{\alpha}\left\llbracket g \zeta + \frac{q^2}{2(h_0+\zeta)^2}\right\rrbracket -\frac{P_\mathrm{ch}}{\alpha\rho},
 \\[4pt]
    & \frac{d P_\mathrm{ch}}{dt} =- \gamma_1 P_\mathrm{ch}+ \gamma_2 q_i.
          \end{aligned}
     \end{cases}
\end{equation}
The initial conditions of the problem are  
\begin{equation}\label{initial-cond}
 \zeta(0,x)=\zeta_0(x), \quad q(0,x)=q_0(x)  \quad \mbox{in}\quad  \mathcal{E}, \qquad \text{ and }\qquad 
q_i(0)=q_{i,0}, \quad P_\mathrm{ch}(0)=P_{\mathrm{ch},0}. 
\end{equation}

\subsection{Reduction of the
transmission problem across the structure to an IBVP}\label{sec3_transmiss_pron_as_IBVP}
In this subsection we show how the $2\times 2$ transmission problem \eqref{transproblemcomplete}-\eqref{initial-cond} can be reduced to a $4\times 4$ one-dimensional quasilinear IBVP with a semilinear boundary condition. 
First, we rewrite \eqref{transproblemcomplete}-\eqref{initial-cond} in the compact form
\begin{equation}
\begin{aligned}
\begin{cases}
\partial_t U + 
A
(U)\partial_x U = 0 &\quad\mbox{in}\quad  (0,T)\times \mathcal{E},\\[5pt]
U(0,x)=U_0(x) &\quad\mbox{in}\quad \mathcal{E},\\[5pt]
M^+U_{
|_{x=r}
}- M^-U_{
|_{x=-r}
}=V(G(t)) &\quad\mbox{in}\quad  (0,T),\\[5pt]
e_1\cdot U_{|_{x=-l}}=g^{(1)}(t), \quad  e_2\cdot U_{|_{x=l}}=g^{(2)}(t) &\quad\mbox{in}\quad  (0,T),
\end{cases}
\end{aligned}
\label{transpb2x2}
\end{equation}
with $U(t,x)=(\zeta(t,x),q(t,x))^T$, the matrices
$$ 
A(U)=\left(\begin{matrix}
			0& 1 \\[5pt]
			g(h_0+\zeta) -\dfrac{q^2}{(h_0+\zeta)^2}& \dfrac{2q}{h_0+\zeta}
				\end{matrix}
				\right), \qquad M^\pm=\left(
\begin{array}{rr}
0& 1 \\
0& \pm\frac{1}{2}
\end{array}
\right),$$
the boundary data $g(t)= (g^{(1)}(t), g^{(2)}(t))=(\zeta_{\rm ent}(t),0)^T$ and 
$G(t)=(q_i(t),P_\mathrm{ch}(t))^T$ that satisfies the evolution equation
\begin{equation}
\begin{cases}
\dot{G}= \Theta \left(G, U_{|_{x=\pm r}
}\right),\\[5pt]
G(0)=G_0.
\end{cases}
\label{semilinearODE}
\end{equation}
The initial data are  $$U_0(x)=(\zeta_0(x),q_0(x))^T, \qquad G_0= (q_{i,0},P_{\rm ch,0})^T.\\[5pt]$$
For $U=(U^{(1)},U^{(2)})^T$, $G=(G^{(1)},G^{(2)})^T$ and  $\Theta=(\Theta^{(1)}, \Theta^{(2)})^T,$ we have  $V(G)=(0,G^{(1)})^T$ and 
\begin{equation*}
    \begin{aligned}
    \Theta^{(1)} (G, U_{|_{x=\pm r}})
    =-\frac{1}{\alpha}\left[( g U^{(1)} + \frac{(U^{(2)})^2}{2(h_0+U^{(1)})^2})_{
    \big|_{x=r}
    } -( g U^{(1)} + \frac{(U^{(2)})^2}{2(h_0+U^{(1)})^2})_{
    \big|_{x=-r}
    }\right] -\frac{G^{(2)}}{\alpha\rho},
     \end{aligned}
    \end{equation*}
    and 
\begin{equation*}
     \Theta^{(2)} \left(G, U_{|_{x=\pm r}}
     \right)  =- \gamma_1 G^{(2)}+ \gamma_2 G^{(1)}.
\end{equation*}
Equation \eqref{semilinearODE} has the same form of the kinematic-type evolution equation considered in \cite{IguLan21} where the authors dealt with a free boundary transmission problem. Here, although we consider a fixed boundary transmission problem, the same situation occurs: the derivative of $G$ has the same regularity as the trace of the solution at the boundary. 
The boundary condition is semilinear, in the sense that the evolution equation \eqref{semilinearODE} is nonlinear only on the trace of the solution at the boundary and not on its derivatives. This would be the case when considering a boat-type structure, which turns out to be a free boundary hyperbolic problem. A kinematic-type evolution equation for the moving contact points $x_\pm(t)$ can be derived after time-differentiating the boundary condition $U(t,x_\pm (t))=U_i(t,x_\pm (t))$, where $U_i$ is a known function. 
In the nonlinear equation obtained, there are terms involving traces of derivatives $\partial U_{
|_{x=\pm r}
}$ and the boundary condition is fully nonlinear because there is a loss of one derivative in the estimates (see\cite{IguLan21}). Here we deal with a less singular evolution equation.

Let us now recast \eqref{transpb2x2}-\eqref{semilinearODE} as an IBVP by introducing a change of variable $x^\prime=-x$ on the spatial space 
$(-l,-r)$
and writing 
\begin{align*}
&u^+(t,x)=U(t,x), \quad u^-(t,x)=U(t,-x),\\& u_0^+(x)=U_0(x), \qquad u_0^-(x)=U_0(-x).
\end{align*}
Thus, the system \eqref{transpb2x2} is equivalent to the following $4\times4$ quasilinear hyperbolic system in $\Omega_T:=(0,T)\times \mathcal{E}_+$, where $\mathcal{E}_+=(r,l)$,
\begin{equation}
		\begin{aligned}
		\begin{cases}
		\partial_t u + \mathcal{A}(u)\partial_x u = 0 &\quad\mbox{in}\quad  \Omega_T,\\[5pt]
		u(0)=u_0(x) &\quad\mbox{in}\quad  \mathcal{E}_+,\\[5pt]
		\mathcal{M}_r u_{|_{x=r}}=V(G(t)) &\quad\mbox{in}\quad  (0,T),
		\\[5pt]
		\mathcal{M}_l u_{|_{x=l}}=g(t) &\quad\mbox{in}\quad  (0,T),
		\end{cases}
		\end{aligned}
		\label{IBVP4x4}
\end{equation}
		
\noindent where $u=(u^-,u^+)^T$, $u_0=(u_0^-,u_0^+)^T$ are $\R^4$-valued functions and\\
$$\mathcal{A}(u)=\mathrm{diag}\left(-A(u^-) ,  A(u^+)\right), \qquad
\mathcal{M}_{
r
}
=\left(-M^- \ M^+\right), \qquad \mathcal{M}_l=\left(\begin{matrix}
1&0&0&0\\
0&0&0&1
\end{matrix}\right),$$\\
are respectively one $4\times4$ matrix and two $2\times 4$ matrices. Moreover, the ODE \eqref{semilinearODE} reads
\begin{equation}
		\begin{cases}
		\dot{G}= \Theta(G, u_{
		|_{x=r}
		} ),\\[5pt]
		G(0)=G_0.
		\end{cases}
\label{semilinearODE4x4}
\end{equation}
In the next section we will study this  IBVP with semilinear boundary condition in a general setting and we will investigate its local well-posedness. 
		
\section{1d Kreiss-symmetrizable hyperbolic IBVPs}\label{KreissHypIBVP}
In this section we study general one-dimensional quasilinear hyperbolic IBVP with a semilinear boundary condition as \eqref{IBVP4x4}-\eqref{semilinearODE4x4}. 
In general, one-dimensional hyperbolic initial boundary value problems are treated by using the method of characteristics and local well-posedness in $C^1$
(see \cite{li-yu1985}, and references therein).
In the Sobolev setting, multi-dimensional results are employed at the cost of high regularity requirements for initial data and derivatives loss with respect to the boundary and initial data. These drawbacks were recently removed in \cite{bocchi2020floating,IguLan21} by taking advantage of the specificities of the one-dimensional case.
Following the argument in \cite{bocchi2020floating,IguLan21} we establish local-in-time well-posedness for Kreiss symmetrizable systems, that is Friedrichs symmetrizable systems whose symmetrizer yields maximal dissipativity on the boundary. This property permits us to 
gain one derivative on the control of the trace of the solution at the boundary and it will be crucial to close the energy estimates needed to apply an iterative scheme argument to get a local well-posedness result.\\
In order to study quasilinear hyperbolic IBVP with a boundary data determined by an evolution equation, we need first to consider linear hyperbolic IBVP with a given boundary data. We will then use the estimates derived from the linear theory for the ``PDE part" and nonlinear estimates for the ``ODE part" to show that the sequence of approximated solutions defined by the iterative scheme is bounded and convergent in some proper spaces. The limit of the sequence will be then the unique solution of the quasilinear problem.

\subsection{Variable-coefficients linear hyperbolic IBVPs}\label{subsection_linearsystem}
In this subsection we deal with linear hyperbolic IBVP with variable coefficients. Let us present some linear energy estimates together with a well-posedness result for a Kreiss-symmetrizable system,
whose definition will be given in the sequel. To do this, we consider the following linear
hyperbolic initial boundary value problem 
\begin{equation}\label{linear_IBVP}
\begin{aligned}
\begin{cases}
\partial_t u + \mathcal{A}(\widetilde{u})\partial_x u = f &\quad\mbox{in}\quad  \Omega_T,\\[5pt]
u(0)=u_0(x) &\quad\mbox{in}\quad  \mathcal{E}_+,\\[5pt]
\mathcal{M}_r u_{
|_{x=r}
}=V(t) &\quad\mbox{in}\quad  (0,T), 
	\\[5pt]
		\mathcal{M}_l u_{|_{x=l}}=g(t) &\quad\mbox{in}\quad  (0,T),
\end{cases}
\end{aligned}
\end{equation}
where $u=u(t,x)$, $u_0$, $\widetilde{u}= \widetilde{u}(t,x)$ and $f=f(t,x)$ are given $\mathbb{R}^4$-valued functions, 
 $\mathcal{A}(\widetilde{u})\in \mathcal{M}_4(\mathbb{R})$, $\mathcal{M}_r$, $\mathcal{M}_l\in \mathcal{M}_{2,4}(\mathbb{R})$ are given constant matrices, 
$V$ and $g$ are given $\R^2$-valued functions.
Let us introduce the definition of Kreiss symmetrizer for a system. 
\begin{definition}\label{Kreiss-symmetrizable}
The hyperbolic initial boundary value problem \eqref{linear_IBVP} is \textit{Kreiss symmetrizable} if there exists a symmetric matrix $\mathcal{S}(x, \widetilde{u})\in \mathcal{M}_4(\R)$, called \textit{Kreiss symmetrizer}, such that $\mathcal{S}(x, \widetilde{u})\mathcal{A}(\widetilde{u})$ is symmetric and the following properties hold:
\begin{enumerate}
\item \label{def-dis}There exist constants $c_1,C_1>0$ such that
$$c_1 |v|^2\leq v^T \mathcal{S}(x, \widetilde{u}) v \leq C_1 |v|^2$$
for any $v\in\R^4$ and $x\in\mathcal{E}_+$.
\item There exist constants $c_2,c_3,C_2,C_3>0$ such that the boundary conditions are \textit{maximal dissipative}, \textit{i.e.}
\begin{align*}v^T (\mathcal{S}(r, \widetilde{u}_{|_{x=r}})\mathcal{A}(\widetilde{u}_{|_{x=r}}))v &\leq -c_2|v|^2+C_2 |\mathcal{M}_{
r
}
v|^2,\\[5pt]
-v^T (\mathcal{S}(l, \widetilde{u}_{|_{x=l}})\mathcal{A}(\widetilde{u}_{|_{x=l}}))v &\leq -c_3|v|^2+C_3 |\mathcal{M}_l v|^2,\end{align*}
for any $v\in\R^4$.
\item \label{bound} There exists a constant $\beta>0$ such that
$$\|\partial_t \mathcal{S}+ \partial_x (\mathcal{S}\mathcal{A})\|_{L^\infty(\Omega_T)}\leq \beta.$$
\end{enumerate}
\end{definition}

A Kreiss symmetrizer is therefore a Friderichs symmetrizer, used in the Cauchy problem theory for hyperbolic systems (see  \cite{benzoni-serre2007}), yielding maximal dissipativity on the boundaries. In order to construct this symmetrizer, we need the following assumptions on the matrix $\mathcal{A}$, the boundary matrices $\mathcal{M}_{
r
}$ and $\mathcal{M}_l$ and matrices called \textit{Lopatinski\u{i}} matrices (see \cite{IguLan21}).

\begin{assumption}\label{assumWPlin}
		Let $\widetilde{u}=(\widetilde{u}^-, \tilde{u}^+)^T$ 
  take values in $ \mathcal{U}=\mathcal{U}^- \times \mathcal{U}^+$ with $\mathcal{U}^-$, $\mathcal{U}^+$ two open sets in $\R^2$.
  There exists a constant $\kappa_0>0$ such that the following properties are satisfied: 
\begin{enumerate}
				\item \label{rankM}$\mathcal{A} \in C^\infty(\mathcal{U})$, $\det(\mathcal{M}_{
				r
				} \mathcal{M}_{
				r
				}^T)\geq \kappa_0$ and $\det(\mathcal{M}_l \mathcal{M}_l^T)\geq \kappa_0.$\\[0.1em]
				\item  $\mathcal{A}(\widetilde{u})=\mbox{diag}(-A^-(\widetilde{u}^-), A^+(\widetilde{u}^+))$ where $A^-(\widetilde{u}^-)$ and $A^+(\widetilde{u}^+)$ have eigenvalues $\pm\lambda_\pm(\widetilde{u}^-)$ and $\pm\lambda_\pm (\widetilde{u}^+)$ respectively. Furthermore, $\tilde{u}$ takes values in a compact and convex set
				 $\mathcal{K}_0\subsetneq \mathcal{U}$ and 
					$$\lambda_\pm(\widetilde{u}^-)\geq \kappa_0, \  \ \lambda_\pm(\widetilde{u}^+)\geq \kappa_0.
				 $$ 
				\item \label{UKL} Let us define the $2\times 2$ 
				 Lopatinski\u{i} matrices $\mathcal{L}_{
				r
				}
	 	(\widetilde{u}_{|_{x=r}})$ and $\mathcal{L}_l(\widetilde{u}_{|_{x=l}})$ respectively by
				\begin{align*}
				\mathcal{L}_{
				r
				}
			(\widetilde{u}_{|_{x=r}})&=\mathcal{M}_{
				r
				} 
			E(\widetilde{u}_{|_{x=r}})\quad \mbox{with}\quad  E(\widetilde{u}_{|_{x=r}})=	\left(
				\begin{matrix}
				e_-( 
    { \widetilde{u}^-}\,_{
				|_{x=r}
				})& 0_{2\times 1} \\
				0_{2 \times 1} & e_+(
    { \widetilde{u}^+}\,_{
			 |_{x=r}})
				\end{matrix}
				\right),\\
				\mathcal{L}_l(\widetilde{u}_{|_{x=l}})&=\mathcal{M}_l 
				E(\widetilde{u}_{|_{x=l}})\quad \mbox{with}\quad  E(\widetilde{u}_{|_{x=l}})=	\left(
				\begin{matrix}
				e_+(
   { \widetilde{u}^-}\,_{|_{x=l}}
    )& 0_{2\times 1} \\
				0_{2 \times 1} & e_-(
    { \widetilde{u}^+}\,_{|_{x=l}}
    )
				\end{matrix}
				\right),
				\end{align*}
				where $e_\pm(
    {\widetilde{u}^-}\,_{|_{x=r,l}})$ are the unit eigenvectors of $A^-(
    {\widetilde{u}^-}\,_{|_{x=r,l}})$ associated with the eigenvalues $\pm\lambda_\pm(
    {\widetilde{u}^-}\,_{|_{x=r,l}})$ and $ e_\pm(
{\widetilde{u}^+}\,_{|_{x=r,l}})$ are the unit eigenvectors of $A^+ ({\widetilde{u}^+}\,_{|_{x=r,l}})$ associated with the eigenvalues $\pm\lambda_\pm(
{\widetilde{u}^+}\,_{|_{x=r,l}})$. 
				Then, 	 $\mathcal{L}_{
				r
	}(\widetilde{u}_{|_{x=r}})$ and $\mathcal{L}_l(\widetilde{u}_{|_{x=l}})$ are invertible and
			$$\|\mathcal{L}_{
			r
			}
			(\widetilde{u}_{|_{x=r}})^{-1}\|_{\mathbb{R}^2 \to \mathbb{R}^2}\leq \frac{1}{\kappa_0}, \qquad \|\mathcal{L}_l(\widetilde{u}_{|_{x=l}})^{-1}\|_{\mathbb{R}^2 \to \mathbb{R}^2}\leq \frac{1}{\kappa_0}.$$ 
\end{enumerate}
\end{assumption}
Notice that the positivity of the determinants in condition \eqref{rankM} means that the rank of both matrices is $2$. This means that we have exactly two boundary conditions at $x=r$ and two boundary conditions at $x=l$. The condition \eqref{UKL} of Assumption \ref{assumWPlin} is a reformulation of the \textit{uniform Kreiss-Lopatinski\u{i}} condition, which can be derived as a stability condition on the normal mode solutions for the  \eqref{linear_IBVP} with fixed coefficients. We refer to \cite{benzoni-serre2007} for more details on this type of condition. In the general multi-dimensional theory the construction of a Kreiss symmetrizer from the uniform Kreiss-Lopatinski\u{i} condition is delicate and involved. Indeed, it requires refined paradifferential calculus and the symmetrizer obtained is a matrix-valued function depending homogeneously on space-time frequencies, hence a symbol. 
Instead, the problem that we are considering in this article is one-dimensional and we can take advantage of the specificities of the one-dimensional
setting to construct a Kreiss symmetrizer in the sense of Definition \ref{Kreiss-symmetrizable}. In the case of a problem on the half-line, the argument of \cite{bocchi2020floating,IguLan21} gives explicitly the symmetrizer. 
However, one cannot apply directly the theory in the half-line case to the bounded interval case, and the latter is not trivial. We therefore develop an adapted theory below.
\begin{lemma}\label{existenceKS}
   Assume that Assumption \ref{assumWPlin} holds for some $\kappa_0>0$. Then, there exist a matrix $\mathcal{S}(x, \widetilde{u})
   $ and positive constants $c_1, C_1,  c_2,C_2,\beta$ such that \eqref{def-dis}-\eqref{bound} in Definition \ref{Kreiss-symmetrizable} are satisfied.
\end{lemma}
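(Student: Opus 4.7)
The plan is to exploit the block-diagonal structure $\mathcal{A}=\mathrm{diag}(-A(\widetilde{u}^-),A(\widetilde{u}^+))$ to build $\mathcal{S}$ explicitly, block by block, from the eigen-data of the two $2\times 2$ blocks, and then tune the weights so that the uniform Kreiss-Lopatinski\u{i} condition \eqref{UKL} produces maximal dissipativity at $x=0$. Since $\lambda_\pm(\widetilde{u}^\pm)\geq\kappa_0$ on the compact set $\mathcal{K}_0$, the eigenvalues of each block are separated, so the unit eigenvectors $e_\pm(\widetilde{u}^\pm)$ can be chosen to depend smoothly on $\widetilde{u}^\pm$. Setting $P^\pm:=[e_-(\widetilde{u}^\pm),\,e_+(\widetilde{u}^\pm)]$, I look for a symmetrizer of the form $\mathcal{S}=\mathrm{diag}(\mathcal{S}^-,\mathcal{S}^+)$ with $\mathcal{S}^\pm:=(P^\pm)^{-T}D^\pm(P^\pm)^{-1}$, where $D^\pm=\mathrm{diag}(d_1^\pm,d_2^\pm)$ are positive diagonal weights to be chosen.

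By construction $\mathcal{S}$ is symmetric, and a direct computation gives $\mathcal{S}^\pm(\pm A(\widetilde{u}^\pm))=(P^\pm)^{-T}D^\pm\,\mathrm{diag}(\mp\lambda_-,\pm\lambda_+)(P^\pm)^{-1}$, which is symmetric because its middle factor is diagonal. The two-sided bound $c_1|v|^2\leq v^T\mathcal{S}v\leq C_1|v|^2$ then follows from the positivity of $D^\pm$ together with uniform bounds on $\|P^\pm\|$ and $\|(P^\pm)^{-1}\|$; the bound on $(P^\pm)^{-1}$ uses that $e_-$ and $e_+$ correspond to distinct eigenvalues on the compact set $\mathcal{K}_0$, so $|\det P^\pm|$ is bounded below.

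The core of the argument is the boundary dissipation. Passing to characteristic variables $w^\pm:=(P^\pm)^{-1}v^\pm$ at $x=0$, the trace quadratic form reads
\begin{equation*}
v^T(\mathcal{S}\mathcal{A})_{|_{x=0}}v=d_1^-\lambda_-(\widetilde{u}^-)(w^-_1)^2+d_2^+\lambda_+(\widetilde{u}^+)(w^+_2)^2-d_2^-\lambda_+(\widetilde{u}^-)(w^-_2)^2-d_1^+\lambda_-(\widetilde{u}^+)(w^+_1)^2,
\end{equation*}
so the positive (``bad'') modes $w_{\mathrm{in}}:=(w^-_1,w^+_2)$ sit precisely along the columns of $E(\widetilde{u}_{|_{x=0}})$. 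Writing $\mathcal{M}v_{|_{x=0}}=\mathcal{L}(\widetilde{u}_{|_{x=0}})w_{\mathrm{in}}+R(\widetilde{u}_{|_{x=0}})w_{\mathrm{out}}$ with $w_{\mathrm{out}}:=(w^-_2,w^+_1)$ and $R$ uniformly bounded on $\mathcal{K}_0$, the invertibility bound $\|\mathcal{L}^{-1}\|\leq\kappa_0^{-1}$ from~\eqref{UKL} yields $|w_{\mathrm{in}}|^2\leq C(|\mathcal{M}v|^2+|w_{\mathrm{out}}|^2)$. Choosing the outgoing weights $d_2^-,\,d_1^+$ sufficiently large relative to $d_1^-,\,d_2^+$, the two negative terms absorb the bad contribution modulo $|\mathcal{M}v|^2$, and the spectral gap $\lambda_\pm\geq\kappa_0$ together with $|v|^2\lesssim|w_{\mathrm{in}}|^2+|w_{\mathrm{out}}|^2$ upgrades this to the maximal dissipativity $v^T(\mathcal{S}\mathcal{A})_{|_{x=0}}v\leq -c_2|v|^2+C_2|\mathcal{M}v|^2$.

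Finally, since $\mathcal{S}$ and $\mathcal{S}\mathcal{A}$ depend smoothly on $\widetilde{u}$ on the compact set $\mathcal{K}_0$ and $\widetilde{u}\in W^{1,\infty}(\Omega_T)$, the chain rule gives $\mathcal{S},\mathcal{S}\mathcal{A}\in W^{1,\infty}(\Omega_T)$, producing the constant $\beta$ in~\eqref{bound}. The main obstacle I expect is the uniform choice of weights in the third step: the constants $c_2,C_2$ must be bounded independently of $t\in[0,T]$ and of the trace $\widetilde{u}_{|_{x=0}}$, which is exactly what forces the quantitative form $\|\mathcal{L}^{-1}\|\leq\kappa_0^{-1}$ of the Kreiss-Lopatinski\u{i} condition rather than mere pointwise invertibility, and requires keeping track of uniform continuity of $P^\pm$, $\lambda_\pm$, and $\mathcal{L}^{-1}$ over $\mathcal{K}_0$.
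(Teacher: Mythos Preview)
Your construction is correct and is precisely the explicit one-dimensional Kreiss symmetrizer built in the reference the paper defers to (Lemma~C.6 of \cite{IguLan21}); the paper itself omits the proof. The block-diagonal ansatz $\mathcal{S}^\pm=(P^\pm)^{-T}D^\pm(P^\pm)^{-1}$ with large outgoing weights $d_2^-,d_1^+$ is exactly the mechanism used there, and your identification of the incoming modes with the columns of $E(\widetilde{u}_{|_{x=0}})$, together with the quantitative bound $\|\mathcal{L}^{-1}\|\leq\kappa_0^{-1}$, reproduces the standard argument faithfully.
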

\begin{proof}
From property (2) of Assumption \ref{assumWPlin}, we know that $\mathcal{A}(\widetilde{u})$ is diagonalizable. We denote its positive eigenvalues by $ \lambda_{+,j}(\widetilde{u})$ and its negative eigenvalues by $ -\lambda_{-,j}(\widetilde{u})$ for $j=1,2$. Then, $ \Pi_{\pm,j}(\widetilde{u})$ are the eigenprojectors associated with the eigenvalues $\pm \lambda_{\pm,j}(\widetilde{u})$. We construct the symmetrizer as
\begin{equation*} 
\mathcal{S}( x,\widetilde{u}) :=  W_+(x)\sum\limits_{j=1}^2 
\Pi_{+,j}(\widetilde{u})
\Pi_{+,j}(\widetilde{u})^{ T} +W_-(x)\sum\limits_{j=1}^2 \Pi_{-,j}(\widetilde{u})^{ T}\Pi_{-,j}(\widetilde{u}) ,
\end{equation*}
where $W_\pm$ are some positive smooth functions such that 
$$W_-(
r
) \gg W_+ (r) \quad \mbox{and}\quad W_+(l) \gg W_-(l).$$
Using the same decomposition of $\mathcal{A}$ as in \cite{IguLan21}, we have
\begin{align*}
& \mathcal{S}(x,\widetilde{u})\mathcal{A}(\widetilde{u})\\&=W_+(x)\sum\limits_{j=1}^2 \lambda_{+,j}(\widetilde{u}) \Pi_{+,j}(\widetilde{u})^{ T}\Pi_{+,j}(\widetilde{u}) -W_-(x)\sum\limits_{j=1}^2 \lambda_{-,j}(\widetilde{u})\Pi_{-,j}(\widetilde{u})^{ T} \Pi_{-,j}(\widetilde{u}). \end{align*}
We start by proving that, for $v\in \Ker \mathcal{M}_{
r
}$,  
\begin{equation}\label{maxdisker0}
    |v|^2\leq - C v^T\mathcal{S}( r,\widetilde{u}_{|_{x=r}}) \mathcal{A}(\widetilde{u}_{|_{x=r}})v,
\end{equation}
and for $v\in \Ker \mathcal{M}_l$,
\begin{equation}\label{maxdisker1}
    |v|^2\leq  C v^T \mathcal{S}( l, \widetilde{u}_{|_{x=l}}) \mathcal{A}(\widetilde{u}_{|_{x=l}}) v.
\end{equation}
Let us decompose $v$ as 
$$v= \sum\limits_{j=1}^2 \Pi_{-,j}(\widetilde{u})v + \sum\limits_{j=1}^2 \Pi_{+,j}(\widetilde{u})v.
$$
On the one hand, we compute that
\begin{align*}&-v^T  \mathcal{S}(r,\widetilde{u}_{|_{x=r}})\mathcal{A}(\widetilde{u}_{|_{x=r}})v\\&=-W_+(
r
) \sum\limits_{j=1}^2 \lambda_{+,j}(\widetilde{u}_{|_{x=r}})|\Pi_{+,j} (\widetilde{u}_{|_{x=r}})v|^2  + W_-(
r
)\sum\limits_{j=1}^2 \lambda_{-,j}(\widetilde{u}_{|_{x=r}})|\Pi_{-,j} (\widetilde{u}_{|_{x=r}}) v|^2.\end{align*}
For $v\in \Ker \mathcal{M}_{r}$ and using the invertibility assumption of the Lopatinski\u{i} matrix $\mathcal{L}_{r} $, we know from \cite{IguLan21} that \begin{equation}\label{control}\sum\limits_{j=1}^2 |\Pi_{+,j}(\widetilde{u}_{|_{x=r}})v|^2 \leq C \sum\limits_{j=1}^2 |\Pi_{-,j}(\widetilde{u}_{|_{x=r}})v|^2, 
\end{equation}
for some constant $C$ depending on $\|\mathcal{M}_{
r
}\|_{\mathbb{R}^4\rightarrow \mathbb{R}^2}$ and $\tfrac{1}{\kappa_0}.$ Using the uniform lower bound of $\lambda_{-,j}$ it follows
\begin{align*}
&W_-(
r
)\kappa_0\sum\limits_{j=1}^2 |\Pi_{-,j} (\widetilde{u}_{|_{x=r}})v|^2 \\&\leq -v^T \mathcal{S}(r,\widetilde{u}_{|_{x=r}})\mathcal{A}(\widetilde{u}_{|_{x=r}})v +W_+(
r
) \max_{j\in\{1,2\}} \sup_{t\in(0,T)}(\lambda_{+,j}(\widetilde{u}_{|_{x=r}}))\sum\limits_{j=1}^2 |\Pi_{+,j} ( \widetilde{u}_{|_{x=r}})v|^2 \\[1pt]
&\leq -v^T \mathcal{S}(r,\widetilde{u}_{|_{x=r}}) \mathcal{A}(\widetilde{u}_{|_{x=r}})v +C W_+(
r
) \max_{j\in\{1,2\}}\sup_{t\in(0,T)}(\lambda_{+,j}(\widetilde{u}_{|_{x=r}})) \sum\limits_{j=1}^2|\Pi_{-,j} (\widetilde{u}_{|_{x=r}}) v|^2,
\end{align*}
where in the second inequality we have used \eqref{control}. Then, since $W_-(
r
)$ is sufficiently larger than $W_+(
r
)$, there exists $c>0$  such that 
\begin{align}\label{diss-minus}
\sum\limits_{j=1}^2 |\Pi_{-,j}(\widetilde{u}_{|_{x=r}})v|^2 \leq - c\ v^T  \mathcal{S}( r,\widetilde{u}_{|_{x=r}})\mathcal{A}(\widetilde{u}_{|_{x=r}})v.
\end{align}
From the decomposition of $v$ and using again \eqref{control} we get
\begin{align*}
|v|^2=\sum\limits_{j=1}^2 |\Pi_{-,j}(\widetilde{u}_{|_{x=r}})v|^2  + \sum\limits_{j=1}^2 |\Pi_{+,j}( \widetilde{u}_{|_{x=r}})v|^2\leq (C+1)\sum\limits_{j=1}^2 |\Pi_{-,j}(\widetilde{u}_{|_{x=r}})v|^2 ,
\end{align*}and the desired estimate follows from \eqref{diss-minus}.
 On the other hand, we compute 
\begin{align*}
&v^T \mathcal{S}(l,\widetilde{u}_{|_{x=l}})\mathcal{A}(\widetilde{u}_{|_{x=l}})
v\\&= W_+(l) \sum\limits_{j=1}^2 \lambda_{+,j}( \widetilde{u}_{|_{x=l}})|\Pi_{+,j}(\widetilde{u}_{|_{x=l}})v|^2  - W_-(l)\sum\limits_{j=1}^2 \lambda_{-,j}(\widetilde{u}_{|_{x=l}})|\Pi_{-,j}(\widetilde{u}_{|_{x=l}})v|^2.\end{align*}
For $v\in \Ker \mathcal{M}_l$ and using the invertibility assumption of the Lopatinski\u{i} matrix $\mathcal{L}_l $ from \cite{IguLan21} we know that \begin{equation}\label{control2}\sum\limits_{j=1}^2 |\Pi_{-,j}(\widetilde{u}_{|_{x=l}})v|^2 \leq C \sum\limits_{j=1}^2 |\Pi_{+,j}(\widetilde{u}_{|_{x=l}})v|^2, 
\end{equation} 
for some constant $C$ depending on $\|\mathcal{M}_l\|_{\mathbb{R}^4\rightarrow \mathbb{R}^2}$ and $\tfrac{1}{\kappa_0}.$ Using the uniform lower bound of $\lambda_{+,j}$, it follows
\begin{align*}&W_+(l)\kappa_0\sum\limits_{j=1}^2 |\Pi_{+,j}(\widetilde{u}_{|_{x=l}})v|^2 \\&\leq v^T \mathcal{S}( l,\widetilde{u}_{|_{x=l}})\mathcal{A}(\widetilde{u}_{|_{x=l}})v +W_-(l) \max_{j\in\{1,2\}} \sup_{t\in(0,T)}(\lambda_{-,j}(\widetilde{u}_{|_{x=l}}))\sum\limits_{j=1}^2 |\Pi_{-,j} (\widetilde{u}_{|_{x=l}}) v|^2 \\[1pt]
&\leq v^T \mathcal{S}(l,\widetilde{u}_{|_{x=l}})\mathcal{A}(\widetilde{u}_{|_{x=l}})v +C W_-(l) \max_{j\in\{1,2\}} \sup_{t\in(0,T)}(\lambda_{-,j}(\widetilde{u}_{|_{x=l}})) \sum\limits_{j=1}^2|\Pi_{+,j}(\widetilde{u}_{|_{x=l}})v|^2,
\end{align*}
where in the second inequality we have used \eqref{control2}. Then, since $W_+(l)$ is sufficiently larger than $W_-(l)$, there exists $c>0$ such that 
\begin{align}\label{diss-minus2}
\sum\limits_{j=1}^2 |\Pi_{+,j} (\widetilde{u}_{|_{x=l}})v|^2 \leq  c\ v^T \mathcal{S}(l,\widetilde{u}_{|_{x=l}})\mathcal{A}(\widetilde{u}_{|_{x=l}})v.
\end{align}
From the decomposition of $v$ and using again \eqref{control2}, we get
\begin{align*}
|v|^2=\sum\limits_{j=1}^2 |\Pi_{-,j} (\widetilde{u}_{|_{x=l}})v|^2 + \sum\limits_{j=1}^2 |\Pi_{+,j} (\widetilde{u}_{|_{x=l}})v|^2\leq (C+1)\sum\limits_{j=1}^2 |\Pi_{+,j} (\widetilde{u}_{|_{x=l}})v|^2 ,
\end{align*}and the desired estimate follows from \eqref{diss-minus2}. 
Finally, one can repeat the same argument used in \cite{IguLan21} and exploit \eqref{maxdisker0}-\eqref{maxdisker1} to obtain both estimates in property $(2)$ of Definition \ref{Kreiss-symmetrizable} for any $v\in \mathbb{R}^4$.
\end{proof}

Therefore, in the well-posedness theorem for the linear initial boundary value problem \eqref{linear_IBVP} we will only assume Assumption \ref{assumWPlin}. Before stating the result, we shall introduce the notion of compatibility conditions for the data of \eqref{linear_IBVP}.
\paragraph{\textbf{Compatibility conditions.}}
In order to have continuous solutions in time and space, the boundary data at initial time must match the boundary conditions at initial time. That is, on the edges $(t,x)=(0,
r
)$ and $(t,x)=(0,l)$ the initial data $u_0$ and boundary data $V, g$ must satisfy
\begin{equation}
\mathcal{M}_{
r
}{u_0}_{|_{x=
r
}}=V_0, \qquad \mathcal{M}_l{u_0}_{|_{x=l}}=g_0,
\label{lin_compatibility-0}
\end{equation} 
with $V_0=V(0)$ and $g_0=g(0)$.  Analogously, defining $u_1=\partial_t u(0,x)$, $V_1= \dot{V}(0)$ and $g_1=\dot{g}(0)$, $C^1$-solutions must satisfy \eqref{lin_compatibility-0} together with 
\begin{equation*}
\mathcal{M}_{r}{u_1}_{|_{x=r}}=V_1, \qquad \mathcal{M}_l{u_1}_{|_{x=l}}=g_1.
\label{lin_compatibility-1}
\end{equation*}
More generally, let us define  $u_k=\partial_t^k u(0,x)$,  
$V_k= V^{(k)}(0)$ and $g_k= g^{(k)}(0)$for $k\geq 0$. Then, smooth enough solutions must satisfy
\begin{equation}
\mathcal{M}_{r} {u_k}_{|_{x=r}}=V_k, \qquad \mathcal{M}_l {u_k}_{|_{x=l}}=g_k.
\label{lin_compatibility-k}
\end{equation}
Let us now define $f_k=\partial_t^k f(0,x).$ Using the evolution equation in \eqref{linear_IBVP} and applying an inductive argument, we can write ${u_k}$
as a function only in terms of the initial data $u_0$ 
and the source term $f$, namely
\begin{equation*}
		{u_{k}}=\mathcal{C}_{\widetilde{u}_{0,...,k-1}}( u_0, f_0, ..., f_{k-1}) \qquad \mbox{for} \quad k\geq 1,
\label{lin_uk}
\end{equation*}
where $\mathcal{C}_{\widetilde{u}_{0,...,k-1}}(u_0, f_0, ..., f_{k-1})$ is a smooth function of $\partial_x^{j+1} u_0$, $\partial_x ^{k-1-j}f_j$ for $j\!=\!0,...,k-1$ and its coefficients depend on $\widetilde{u}_0$, $\partial_x ^{k-1-j}\widetilde{u}_j$ for $j\!=\!0,...,k-1$.
The function $\mathcal{C}_{\widetilde{u}_{0,...,k-1}}(u_0, f_0, ..., f_{k-1})$ can be written in an explicit way by repeatedly using Faá di Bruno's formula. As it is not
relevant to our analysis, we only give its explicit expression for $k=1,2$ and we refer the reader to \cite{di1857note} for more details. They read
\begin{align*}
&\mathcal{C}_{\widetilde{u}_{0}}( u_0, f_0)= -\mathcal{A}(\widetilde{u}_0)\partial_x u_0 + f_0,
\\[2pt]
\mathcal{C}_{\widetilde{u}_{0,1}}(u_0, f_0, f_1)\!=\! (-D_\mathcal{A}(\widetilde{u}_0)
\!\cdot\!\widetilde{u}_1 & + \mathcal{A}(\widetilde{u}_0)D_\mathcal{A}(\widetilde{u}_0)\!\cdot \!\partial_x\widetilde{u}_0) \partial_x u_0 + \mathcal{A}^2(\widetilde{u}_0)\partial_{xx}u_0 - \mathcal{A}(\widetilde{u}_0)\partial_x f_0 + f_1.
\end{align*}
The compatibility conditions above permit us to introduce the following definition. 
\begin{definition}\label{lin_compatibility}
			Let $m\geq 1$ be an integer. The data $u_0\in H^m(
			r
			,l)$, $f\in H^m(\Omega_T)$ and $V,g\in H^m(0,T)$ of the linear initial boundary value problem \eqref{linear_IBVP} satisfy the compatibility conditions up to order $m-1$ if \eqref{lin_compatibility-k} holds for $k = 0, 1, ..., m -1.$
\end{definition}
\noindent We can now state the well-posedness theorem for the linear IBVP \eqref{linear_IBVP} with given boundary data.

\begin{theorem}\label{thm_Iguchi_lannes}
Let $m \geq 1$ be an integer and $T >0$. Assume that Assumption \ref{assumWPlin} holds for some $\kappa_0>0$ and that there exist constants $0 < K_0 \leq K$ such that
\begin{equation*}
\frac{1}{\kappa_0}, \|\mathcal{A}\|_{L^{\infty}(\mathcal{K}_0)}, \|\mathcal{M}_{
r
}\|_{\mathbb{R}^4\rightarrow \mathbb{R}^2}, \|\mathcal{M}_l\|_{\mathbb{R}^4\rightarrow \mathbb{R}^2} \leq K_0,  \qquad
\|\mathcal{A}\|_{W^{m,\infty}(\mathcal{K}_0)}, \ \|\widetilde{u}\|_{W^{1, \infty} (\Omega_T) \cap \mathbb{W}^m (T)} \leq K.
\end{equation*}
Then, for any data $u_0 \in H^m (
\mathcal{E}_+)$, $ V,g \in H^{m} (0, T)$, and $f \in H^m (\Omega_T)$ satisfying the compatibility conditions up to order $m-1$ in the sense of Definition \ref{lin_compatibility}, there exists a unique solution  $u\in \mathbb{W}^m (T)$ to the initial boundary value problem \eqref{linear_IBVP}. Moreover, the following inequality holds for any $t\in [0, T]$:
\begin{equation}\label{estimate_thm1}\begin{aligned}
  &\vertiii{u(t)}_m + |u_{|_{x=
  r
  ,l}}|_{m,t} \\
 &\leq C(K_0)e^{C(K)t} \left(\vertiii{u(0)}_m+ |(V,g)|_{H^m(0,t)} + |f_{|_{x=
 r
 ,l}}|_{m-1, t} + \int_{0}^{t} \vertiii{f(t')}_{m} dt' \right).
 \end{aligned}
\end{equation}
\end{theorem}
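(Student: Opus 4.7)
The plan is to follow the classical strategy for linear Kreiss-symmetrizable IBVPs: establish \textit{a priori} estimates for smooth solutions using the Kreiss symmetrizer provided by the preceding lemma, then construct solutions by a regularization argument, with uniqueness falling out of the same estimates.

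\emph{Step 1: $L^2$ estimate.} I would multiply \eqref{linear_IBVP} by $\mathcal{S}u$ and integrate on $\R_+$. Using that $\mathcal{S}\mathcal{A}$ is symmetric, one integration by parts in $x$ yields
$$\frac{d}{dt}\int_{\R_+} u^T \mathcal{S} u\, dx = \int_{\R_+} u^T\bigl(\partial_t \mathcal{S} + \partial_x(\mathcal{S}\mathcal{A})\bigr) u \, dx + \bigl(u^T \mathcal{S}\mathcal{A}u\bigr)_{|_{x=0}} + 2\int_{\R_+} u^T \mathcal{S} f\, dx.$$
Property \eqref{def-dis} of Definition \ref{Kreiss-symmetrizable} gives a boundary contribution bounded above by $-c_2|u_{|_{x=0}}|^2 + C_2|V|^2$, and property \eqref{bound} controls the bulk term by $\beta \|u\|^2_{L^2}$. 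After absorbing, integrating in time and applying Grönwall's lemma one obtains the $m=0$ version of \eqref{estimate_thm1}, namely
$$\|u(t)\|_{L^2}^2 + |u_{|_{x=0}}|^2_{L^2(0,t)} \le C(K_0)e^{C(M)t}\bigl(\|u_0\|^2_{L^2} + |V|^2_{L^2(0,t)} + \|f\|^2_{L^2(\Omega_t)}\bigr).$$
Notice that the maximal dissipativity is what allows the boundary trace norm to appear on the \emph{same} side as the solution norm, without loss of a derivative.

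\emph{Step 2: higher-order estimates.} Applying $\partial_t^k$ to the system, $\partial_t^k u$ solves an IBVP of the same form with source $\partial_t^k f - [\partial_t^k,\mathcal{A}]\partial_x u$ and boundary data $\partial_t^k V$; the initial datum $u_k=(\partial_t^ku)_{|_{t=0}}$ is prescribed by the inductive formula $u_k=\mathcal{C}_{k,\tilde u}(u_0,\dots,\partial_x^k u_0, f_{k-1})$ and the compatibility conditions ensure $\mathcal{M}u_k|_{x=0}=V_k$, so that Step 1 applies. The commutator $[\partial_t^k,\mathcal{A}]\partial_x u$ is handled by Moser-type estimates using the bound on $\|\tilde u\|_{W^{1,\infty}\cap \mathbb{W}^m}$. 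To pass from pure time derivatives back to the mixed norm $\vertiii{u}_m$, I would use the equation itself: since $A(\tilde u^\pm)$ has eigenvalues bounded away from zero by $\kappa_0$, $\mathcal{A}$ is invertible, and $\partial_x u = \mathcal{A}^{-1}(f-\partial_t u)$; iterating this identity expresses every $\partial_x^j\partial_t^{k-j}u$ in terms of time derivatives of $u$ and $f$, closing the estimate. Controlling $|u_{|_{x=0}}|_{m,t}$ requires the same inversion so that tangential (time) derivatives at the boundary translate into all mixed traces.

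\emph{Step 3: existence and uniqueness.} For existence I would mollify the coefficient $\tilde u$, the datum $u_0$, the source $f$ and the boundary value $V$, and correct the mollified initial datum using the $u_k$'s so that the compatibility conditions of order $\le m-1$ are preserved; this step is standard but non-trivial. The smooth problem can be solved by characteristics or by a parabolic vanishing-viscosity regularization with appropriate boundary conditions; the uniform estimate from Steps 1--2 then allows passage to the limit, yielding $u\in \mathbb{W}^m(T)$ satisfying \eqref{estimate_thm1}. Uniqueness follows by applying the $L^2$ estimate to the difference of two solutions, which solves the homogeneous problem with zero data.

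\emph{Main obstacle.} The genuinely delicate point is not the Grönwall-type argument itself, but the combined requirement that the approximate data satisfy the compatibility conditions at every order and that the commutators $[\partial_t^k,\mathcal{A}]\partial_x u$ be controlled without losing any derivative on the boundary trace. It is precisely the one-dimensional nature of the problem together with the explicit Kreiss symmetrizer, which makes $\mathcal{S}\mathcal{A}$ symmetric at every point and delivers true maximal dissipativity, that lets one close this loop; the details of this construction and the corresponding Moser-type estimates are precisely those developed in \cite{IguLan21}, which I would invoke to complete the argument.
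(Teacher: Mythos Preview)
Your proposal is correct and follows essentially the same approach as the paper. The paper itself only gives a brief sketch, outlining exactly your three steps: the $L^2$ estimate via the Kreiss symmetrizer yielding simultaneous control of the solution and its boundary trace, the lift to higher-order Sobolev norms via commutator and Moser-type estimates, and existence/uniqueness from classical arguments, while referring to \cite{IguLan21} for the full details.
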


We will apply Theorem \ref{thm_Iguchi_lannes} later in order to prove the well-posedness result for the quasilinear IBVP \eqref{IBVP4x4}-\eqref{semilinearODE4x4}. Although the interest of this article does not lie in the well-posedness of the linear IBVP \eqref{linear_IBVP}, it is worth to briefly sketch the proof and refer the reader to \cite{IguLan21} for more details.
The first step is to prove an \textit{a priori} $L^2$ estimate taking advantage of the existence of a Kreiss symmetrizer provided in Lemma \ref{existenceKS}. 
As we explained in the previous sections,
this yields dissipativity on the boundary conditions and thanks to the good signs in the energy estimate we can get a control not only for $\|u(t)\|_{L^2(\mathcal{E}_+)}$ itself but also for the trace term $|u_{|_{x=
r
,l}}|_{L^2(0,t)}$. 
Next, one needs to generalise $L^2$ estimates to higher-order Sobolev spaces by employing commutator and Moser-type estimates. 
Finally, following classical arguments (see for instance \cite{benzoni-serre2007, metivier2001,metivier2012small}) the existence and uniqueness of the solution $u\in \mathbb{W}^m(T)$ is obtained from the \textit{a priori} estimates and the compatibility conditions.

\subsection{Nonlinear estimates}\label{subsection_nonlinear}
Let us state here some Moser-type nonlinear estimates (see for instance \cite{alinhac2012operateurs}) that we will use later in the analysis of Subsection \ref{subsection_IBVP}. We denote by $[k]$ the integer part of $k\in \mathbb{R_+}.$
\begin{lemma}\label{lemma_moser_sobolev}
		Let $\mathcal{U}$ be an open set in $\mathbb{R}^N$ and let $F \in C^\infty(\mathcal{U})$ be a function such that $F(0) = 0$. For $m\in\mathbb{N}$, if 
		$u \in H^m(0,T)$ takes values in a compact and convex set $\mathcal{K} \subsetneq \mathcal{U}$, then
		$$
		|F(u)|_{H^m(0,T)} \leq C_F(|u|_{W^{[m/2],\infty}(0,T)}) |u|_{H^m(0,T)}.
		$$
		Moreover, if $u \in H^m(0,T)$ and $v \in H^m(0,T)$ with $m\geq 1$ take values in $\mathcal{K}$, we have 
		$$
		|F(u) - F(v)|_{H^m(0,T)} \leq C_F (|u, v|_{H^m(0,T)}) 
		|u -v|_{H^m(0,T)}.
		$$
\end{lemma}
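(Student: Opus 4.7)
The plan is to prove both bounds by reducing to the standard tame (Moser) product estimate combined with Faà di Bruno's formula for the composition. First I would apply Faà di Bruno to write, for any $1 \leq k \leq m$,
\begin{equation*}
\partial_t^{k}\!\bigl[F(u)\bigr] \;=\; \sum_{\substack{1 \leq j \leq k \\ \alpha_1+\cdots+\alpha_j=k,\;\alpha_i\geq 1}} c_{k,\alpha}\,(D^{j}F)(u)\,\prod_{i=1}^{j}\partial_t^{\alpha_i} u,
\end{equation*}
and treat the $k=0$ term separately via $F(u)=F(u)-F(0)=u\!\int_0^1 DF(\theta u)\,d\theta$, so that $|F(u)|_{L^2(0,T)} \leq C_F(\|u\|_{L^\infty})|u|_{L^2(0,T)}$. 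Since $u$ takes values in the fixed compact set $\mathcal{K}\subset\mathcal{U}$, every factor $(D^{j}F)(u)$ is bounded in $L^\infty(0,T)$ by a constant depending only on $F$ and $\mathcal{K}$, hence on $\|u\|_{L^\infty}\leq\|u\|_{W^{[m/2],\infty}}$.

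The key combinatorial step then estimates the product $\prod_i \partial_t^{\alpha_i}u$ in $L^2(0,T)$ with $\sum \alpha_i = k \leq m$ and $\alpha_i \geq 1$. At most one index $\alpha_{i_0}$ can exceed $[m/2]$: otherwise two of them would sum to strictly more than $m \geq k$, a contradiction. Placing that (possibly unique) large factor in $L^2$ and all remaining $j-1$ factors in $L^\infty$ by Hölder yields
\begin{equation*}
\Bigl|\prod_{i=1}^{j}\partial_t^{\alpha_i} u\Bigr|_{L^2(0,T)} \;\leq\; |u|_{H^m(0,T)}\,\|u\|_{W^{[m/2],\infty}(0,T)}^{\,j-1}.
\end{equation*}
Summing over the finitely many partitions and over $k=0,\ldots,m$ produces the desired first bound with $C_F(\cdot)$ a polynomial in $\|u\|_{W^{[m/2],\infty}}$ whose coefficients depend on $\sup_{\mathcal{K}}|D^{j}F|$ for $j\leq m$.

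For the second inequality I would write
\begin{equation*}
F(u)-F(v) \;=\; (u-v)\int_{0}^{1} DF\bigl(\theta u+(1-\theta)v\bigr)\,d\theta \;=:\; (u-v)\,\widetilde{F}(u,v),
\end{equation*}
with $\widetilde F\in C^\infty$ of two variables. The one-dimensional Sobolev embedding $H^{[m/2]+1}(0,T)\hookrightarrow W^{[m/2],\infty}(0,T)$, combined with $[m/2]+1\leq m$ for $m\geq 1$, allows us to control $\|u\|_{W^{[m/2],\infty}}$ and $\|v\|_{W^{[m/2],\infty}}$ by $|u|_{H^m}$ and $|v|_{H^m}$ respectively; in particular $(u,v)$ takes values in a compact set determined by those norms. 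Applying the two-variable analogue of the first estimate (obtained by the very same Faà di Bruno argument) to $\widetilde F(u,v)$ together with the standard tame product estimate $|fg|_{H^m}\leq C(|f|_{H^m}\|g\|_{L^\infty}+\|f\|_{L^\infty}|g|_{H^m})$ applied to $f=\widetilde F(u,v)$ and $g=u-v$ yields the claim.

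The only real technical issue is the careful bookkeeping of the multi-index sum coming from Faà di Bruno and keeping the constants of the right form; the crucial non-trivial observation, which prevents any loss of regularity, is the "at most one high-order factor" remark that localises the $H^m$ loss on a single factor and leaves all others at the much cheaper $W^{[m/2],\infty}$ level. Everything else is Hölder inequality and one-dimensional Sobolev embedding on the bounded interval $(0,T)$.
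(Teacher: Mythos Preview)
Your proposal is correct and follows essentially the same route as the paper: both use Fa\`a di Bruno's formula for $(F(u))^{(k)}$, bound $(D^jF)(u)$ in $L^\infty$, and place the highest-order derivative factor in $L^2$ while the remaining factors stay in $W^{[m/2],\infty}$. The paper's proof is terser---it simply states the resulting bound without isolating the ``at most one index can exceed $[m/2]$'' observation that you make explicit---but the mechanism is identical. For the difference estimate the paper merely writes ``the same argument,'' whereas you spell out the standard factorisation $F(u)-F(v)=(u-v)\int_0^1 DF(\theta u+(1-\theta)v)\,d\theta$ and then appeal to the first part plus a product estimate; this is a legitimate and slightly more explicit variant of what the paper intends.
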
	
	
\begin{lemma}[see \cite{IguLan21,metivier2012small}]\label{lemma_moser}
		Let $\mathcal{U}$ be an open set in $\mathbb{R}^N$ and let $F \in C^\infty(\mathcal{U})$ be a function such that $F(0) = 0$. For $m\in\mathbb{N}$, if 
		$u \in \mathbb{W}^m(T)$ takes values in a compact and convex set $\mathcal{K} \subsetneq \mathcal{U}$, then 
  for all $t \in [0, T]$:
		$$
		\vertiii{F(u)(t)}_m \leq C_F(\|u\|_{W^{[m/2],\infty}(\Omega_T)}) \vertiii{u(t)}_m.
		$$
		Moreover, if $u \in \mathbb{W}^m (T)$ and $v \in \mathbb{W}^m(T)$ with $m\geq 1$ take values in $\mathcal{K}$, we have 
		$$	\vertiii{(F(u) - F(v))(t)}_m \leq C_F(\vertiii{u(t),v(t)}_m) \vertiii{(u -v)(t)}_m.		$$
\end{lemma}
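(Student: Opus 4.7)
This is a Moser-type composition estimate adapted to the anisotropic space $\mathbb{W}^m(T)$, which mixes time and space derivatives. The proof strategy is entirely classical: the first inequality follows by expanding $F(u)$ via Fa\`a di Bruno and Leibniz, followed by a Gagliardo-Nirenberg interpolation; the second is reduced to the first by a Taylor-type integration together with a product estimate.

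For the first inequality, recall that $\vertiii{F(u)(t)}_m = \sum_{k=0}^{m} \|\partial_t^k F(u)(t)\|_{H^{m-k}(\mathbb{R}_+)}$. First, I would apply Fa\`a di Bruno's formula in the time variable to write $\partial_t^k F(u)$ as a finite linear combination of terms $F^{(p)}(u) \prod_{i=1}^p \partial_t^{k_i} u$ with $k_i \geq 1$ and $\sum_i k_i = k$. Expanding further by Leibniz in $\partial_x^j$ (for $j \leq m-k$), and applying Fa\`a di Bruno once more to every resulting factor $\partial_x^{\cdot} [F^{(p)}(u)]$, I would recast $\partial_t^k \partial_x^j F(u)$ as a finite sum of terms of the form
\begin{equation*}
F^{(r)}(u) \prod_{n=1}^{N} \partial_t^{a_n} \partial_x^{b_n} u, \qquad \sum_{n=1}^{N} (a_n + b_n) \leq m.
\end{equation*}
Since $u$ takes values in the compact set $\mathcal{K}$ and $F$ is smooth, each $F^{(r)}(u)$ is pointwise bounded by a nondecreasing function of the $L^\infty$ norm of $u$. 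The next step is the Gagliardo-Nirenberg-type interpolation: in a product of mixed derivatives of $u$ whose total order is at most $m$, at most one factor may carry order strictly greater than $[m/2]$. That single ``high-order'' factor is controlled in $L^2(\mathbb{R}_+)$ by $\|u(t)\|_m$, while each of the remaining factors is bounded pointwise by $\|u\|_{W^{[m/2],\infty}(\Omega_t)}$. Combining by H\"older's inequality and summing over the finitely many terms produces the bound $\vertiii{F(u)(t)}_m \leq C_F(\|u\|_{W^{[m/2],\infty}(\Omega_t)})\, \vertiii{u(t)}_m$.

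For the difference estimate I would write
\begin{equation*}
F(u) - F(v) = (u-v)\, H(u,v), \qquad H(u,v) := \int_0^1 F'\bigl(v + \tau(u-v)\bigr) \, d\tau,
\end{equation*}
and apply the two-variable extension of the first inequality to the smooth function $H$ in order to bound $\vertiii{H(u,v)(t)}_m$ by a nondecreasing function of $\vertiii{u(t)}_m$ and $\vertiii{v(t)}_m$. A product Moser estimate in $\mathbb{W}^m(T)$, proved by the same Gagliardo-Nirenberg bookkeeping, then yields the claimed bound for $\vertiii{(F(u)-F(v))(t)}_m$.

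The main obstacle is the combinatorial bookkeeping of multi-indices in the Gagliardo-Nirenberg step, which must track both time and space orders for mixed derivatives of $u$ at fixed $t$ and verify the dichotomy ``at most one high-order factor''. This is the classical content of the Moser inequality in anisotropic Sobolev-type spaces; rather than reproducing the argument, I would invoke the detailed statements in Alinhac-G\'erard \cite{alinhac2012operateurs} or M\'etivier \cite{metivier2012small}.
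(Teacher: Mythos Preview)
Your proposal is correct and follows essentially the same approach as the paper. The paper does not give a separate proof of this lemma: it only sketches the proof of the simpler one-variable analogue (Lemma~\ref{lemma_moser_sobolev}) via Fa\`a di Bruno and the observation that in a product $u^{(i_1)}\cdots u^{(i_j)}$ with $\sum i_l=k$ at most one index exceeds $[k/2]$, and then states that ``a similar argument can be repeated'' for the $\mathbb{W}^m(T)$ version, referring to \cite{IguLan21,metivier2012small} for details. Your expansion via Fa\`a di Bruno in $t$, Leibniz in $x$, and the ``at most one high-order factor'' dichotomy is exactly the natural adaptation to the mixed time-space setting; your treatment of the difference estimate through the Taylor integral $F(u)-F(v)=(u-v)\int_0^1 F'(v+\tau(u-v))\,d\tau$ is a standard equivalent route to what the paper leaves implicit under ``the second estimate can be derived by using the same argument''.
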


\begin{remark}
We use these nonlinear estimates because in the standard Moser nonlinear estimates $$\|F(u)\|_{H^m(D)}\leq C_F(\|u\|_{L^\infty(D)})\|u\|_{H^m(D)} \qquad \mbox{with}\quad D=\Omega_T \mbox{ or } (0,T),$$
the constant $C_F$ is time-dependent and blows-up as $T\rightarrow 0$. Since our goal is to use a contraction argument for the existence of the solution in which we will consider a small existence time $T$, we need nonlinear estimates with time-independent constants as the ones derived in Lemma \ref{lemma_moser_sobolev} and Lemma \ref{lemma_moser}. We refer to \cite{bresch2019waves,metivier2001} for sharp nonlinear estimates that provide blow-up criteria, in which the interest of this work does not lie.
\end{remark}

\subsection{Estimates for the ODE}\label{subsection_ODE}
We remark the fact that the boundary condition in the initial boundary value problem \eqref{IBVP4x4}--\eqref{semilinearODE4x4} is not a given information but it is a semi-linear boundary condition given by an ODE.
This subsection is devoted to establish Sobolev estimates for the solution to
\begin{equation}\label{general_ODE}
\dot{G} (t)= \Theta(G(t), u_{|_{x=
r
}}(t) ), \quad
G(0)=G_0,
\end{equation}
where $G(t) =(G_1(t),\cdots,G_N(t))^T$ is a $N$-dimensional function,  $u_{|_{x=r}}(t) = $ $((u_{1})_{|_{x=r}}(t),\\ \cdots, (u_{M})_{|_{x=r}}(t))^T$ is a given $M$-dimensional function and $\Theta = (\Theta_1,\cdots, \Theta_N)^T$ is a nonlinear smooth function. We construct a successive sequence of approximation solution $\{G^n\}_{n \in \mathbb{N} }$ to the Cauchy problem \eqref{general_ODE} defined by  
\begin{equation}\label{equation_Gn}
\dot{G}^{n+1} (t)= \Theta(G^n (t), {u^n}_{|_{x=
r
}}(t) ),\quad
G^{n+1}(0)=G_0.
\end{equation}
Some high-order estimates on the sequence $\{G^n\}_{n \in \mathbb{N} }$ are stated in the following proposition.
\begin{proposition}\label{propo_ode}
Let $\mathcal{G} \times \mathcal{U}_r$ be an open set in $\mathbb{R}^N \times \mathbb{R}^M$, representing a phase space of $(G, u_{|_{x=r}})$ and let $\Theta \in C^{\infty} (\mathcal{G} \times \mathcal{U}_r)$. 
Given $m\geq1$ and $T>0$, assume that $\{G^{n}\}_{n \in \mathbb{N} }\in H^{m+1}(0,T)$ and $\{ {u^n}_{|_{x=r}}\}_{n \in \mathbb{N} }\in H^m(0,T)$ satisfy \eqref{equation_Gn} and that they take values in compact and convex sets of $\mathcal{G}$ and $\mathcal{U}_r$ respectively. Moreover, assume that $(G^{n})^{(k)}(0)$ for $k=0,...,m$ and $(\partial_t^k{u^n})(0,r)$ for $k=0,...,m-1$ are independent of $n$ and that there exists $K_0>0$ such that
$$\sum\limits_{k=0}^{m} |(G^{n})^{(k)}(0)|,
\quad 
\sum\limits_{k=0}^{m-1} |(\partial^k_t{u^n})(0,r)| \leq K_0.$$
Then, we have \begin{align}\label{estimate_G_T}
|G^{n+1}|_{H^{m}(0,T)} 
&\leq \sqrt{T}C(K_0)+ TC_{\Theta}( K_0,|G^n, {u^n}_{|_{x=r}}|_{H^m(0,T)})
,\\[2pt]
\label{estimate_G}
|G^{n+1}|_{H^{m+1}(0,T)} 
&\leq \sqrt{T}C(K_0)+ (T+1)C_{\Theta}( K_0,|G^n, {u^n}_{|_{x=r}}|_{H^m(0,T)})
,
\end{align}
and
\begin{equation}
\label{estimate_G_diff}
\begin{aligned}
    |G^{n+1} - G^{n}|_{H^{m}(0,T)}  \leq & \
TC_{\Theta}(|G^n, G^{n-1}|_{H^m(0,T)}, |{u^n}_{|_{x=r}},{u^{n-1}}_{|_{x=r}}|_{H^{m}(0,T)})\\
 &\times (|G^{n} - G^{n-1}|_{H^{m}(0,T)} + |({u^{n}}-{u^{n-1}})_{|_{x=r}}|_{H^{m}(0,T)}).
\end{aligned}
\end{equation}

\end{proposition}
\begin{proof}
We divide the proof into two steps, one for each estimate.\\
\textbf{Step 1:} Let us first write the derivative of $G^{n+1}(t)$ of order $0\leq k \leq m$ as 
\begin{equation}\label{integral_X_n+1}
\begin{aligned}
  (G^{n+1})^{(k)} (t) & =  (G^{n+1})^{(k)}(0) + \int_{0}^{t} (\dot{G}^{n+1})^{(k)}(s) \ ds \\
   & = (G^{n+1})^{(k)}(0) + \int_{0}^{t}  \partial_s^k\Theta(G^{n}(s), {u^n}_{|_{x=
   r
   }} (s) ) \ ds ,
\end{aligned}
\end{equation}
where we have used the iterative ODE \eqref{general_ODE}.
Taking the sum over $k$ and using \eqref{equation_Gn} yield
\begin{equation}\label{estimate_Hm}
\begin{aligned}
&|G^{n+1}|_{H^{m+1}(0,T)}^2 
=|G^{n+1}|_{H^m(0,T)}^2+ |(\dot{G}^{n+1})^{(m)}|_{L^2(0,T)}^2 \\
& = \sum_{k=0}^{m}\left|(G^{n+1})^{(k)}(0) + \int_{0}^{t} \partial_s^k\Theta (G^n (s), {u^n}_{|_{x=
r
}}(s) )  ds \right|^2_{L^{2} (0, T)} + |\partial_t^m\Theta(G^n,{u^n}_{|_{x=
r
}} )|_{L^2(0,T)}^2\\
&\leq 2T \sum_{k=0}^{m} |(G^{n+1})^{(k)}(0)|^2 \!+\! 2\sum_{k=0}^{m} \left|\sqrt{t}\ | \partial_s^k\Theta (G^n , {u^n}_{|_{x=r}} )|_{L^2(0,t)} \right|^2_{L^{2} (0, T)} \\
&\qquad  
+ |\Theta(G^n,{u^n}_{|_{x=
r
}} )|_{H^m(0,T)}^2\\
& \leq T C(K_0) + (T^2+1) \left| \Theta (G^n , {u^n}_{|_{x=
r
}} ) \right|^2_{H^{m} (0, T)}.\\[2pt]
\end{aligned}
\end{equation}
Let us take any point $(G^*,u^*)\in \mathcal{G}\times \mathcal{U}_r$ and define $$\Theta_0(G, u_{|_{x=r}})= \Theta(G+G^*,  u_{|_{x=r}} + u^*) - \Theta(G^*, u^*).$$
Then, $\Theta_0\in C^\infty(\mathcal{G}\times \mathcal{U}_r)$ with $\Theta_0(0,0)=0$ and we have 
\begin{equation*}
    \begin{aligned}
        |\Theta(G^n, {u^n}_{|_{x=r}})|_{H^m(0,T)}&= |\Theta_0(G^n-G^*,{u^n}_{|_{x=r}}-u^*) + \Theta(G^*,u^*)|_{H^m(0,T)}\\[2pt]&\leq |\Theta_0(G^n-G^*,{u^n}_{|_{x=r}}-u^*)|_{H^m(0,T)} + |\Theta(G^*, u^*)|\sqrt{T}.
    \end{aligned}
\end{equation*}
The first estimate in Lemma \ref{lemma_moser_sobolev} gives 
\begin{equation*}
\begin{aligned}
 	&\left| \Theta_0(G^n -G^*, {u^n}_{|_{x=r}} -u^*) \right|_{H^{m} (0, T)} \\&
	\leq  \,C_{\Theta}(|G^n -G^*,  {u^n}_{|_{x=r}} -u^* |_{W^{[m/2],\infty}(0,T)} ) (|G^n -G^* |_{H^m(0,T)} + | {u^n}_{|_{x=r}} -u^*|_{H^m(0,T)}).\\[2pt]
\end{aligned}
\end{equation*}
By means of \eqref{integral_X_n+1} and using that $[m/2] + 1 \leq m$, we obtain
\begin{equation}\label{estimat_G^n-Winfty}
\begin{aligned}
|G^n -G^*|_{W^{[m/2],\infty}(0,T)}
 &\leq \sum_{k=0}^{[m/2]}| (G^n -G^*)^{(k)}(0)| + \sqrt{T} |G^n -G^*|_{H^{[m/2]+1}(0,T)}
\\&\leq C(K_0) +  \sqrt{T} |G^n -G^*|_{H^{m}(0,T)},
\end{aligned}
\end{equation}
and, analogously,
\begin{equation*}
\begin{aligned}
|{u^n}_{|_{x=r}} -u^*  |_{W^{[m/2],\infty}(0,T)}
& \leq \sum_{k=0}^{[m/2]}| (\partial_t^k
({u^n}_{|_{x=r}} -u^*) )(0,r)| + \sqrt{T} |{u^n}_{|_{x=r}} -u^*  |_{H^{[m/2]+1}(0,T)},\\
& \leq C(K_0) +  \sqrt{T} |{u^n}_{|_{x=r}} -u^*  |_{H^{m}(0,T)}.
\end{aligned}
\end{equation*}
Gathering all these estimates together yields
\begin{equation}\label{estimat_F_twofunc}
\begin{aligned}
\left| \Theta(G^n , {u^n}_{|_{x=
r
}} ) \right|_{H^{m} (0, T)} 
	&\leq  C_{\Theta}(K_0, |G^n-G^*, {u^n}_{|_{x=
	r
	}} -u^*|_{H^m(0,T)} )\\[2pt]
 & \leq C_{\Theta}(K_0, |G^n, {u^n}_{|_{x=
	r
	}}|_{H^m(0,T)} ),
\end{aligned}
\end{equation}
which, together with \eqref{estimate_Hm}, implies \eqref{estimate_G}. The $H^m$-estimate \eqref{estimate_G_T} is then straightforward.\\
\textbf{Step 2:}  
Using again \eqref{equation_Gn} and the fact that the initial conditions of $G^n$ and its derivatives are independent of $n$, we have for $0\leq k\leq m$
\begin{equation*}
    (G^{n+1})^{(k)} (t) - (G^{n})^{(k)} (t)    =  
  \int_{0}^{t}  \left( \partial_s^k\Theta(G^n (s), {u^n}_{|_{x=
r
 }} (s) )
- \partial_s^k\Theta(G^{n-1} (s), {u^{n-1}}_{|_{x=
r
}}(s) ) \right) \ ds.
\end{equation*}
Doing the same computation as in \eqref{estimate_Hm}, we obtain
\begin{equation}\label{estimate_diff_Gn}\begin{aligned}
   & |G^{n+1}-G^{n} |_{H^m(0,T)}^2 \\
&\leq 
 \sum_{k=0}^{m} \left|\sqrt{t} \ |( \partial_s^k\Theta(G^n , {u^n}_{|_{x=
r
}}
\!-\! \partial_s^k\Theta(G^{n-1}, {u^{n-1}}_{|_{x=
r
}}  )|_{L^2(0, t)} \right|^2_{L^{2} (0, T)}\\
& \leq 
 \frac{T^2}{2} \left| \Theta(G^{n}, {u^{n}}_{|_{x=
r
}} ) - \Theta(G^{n-1}, {u^{n-1}}_{|_{x=
r
}} ) \right|^2_{H^{m} (0, T)}.
\end{aligned}
\end{equation}
The second estimate in Lemma \ref{lemma_moser_sobolev} yields
\begin{equation}\label{estimate_ThetaG_diff}
\begin{aligned}
   &\left|\Theta(G^{n}, {u^{n}}_{|_{x=
   r
   }} ) \!- \!\Theta(G^{n-1}, {u^{n-1}}_{|_{x=
   r
   }} )\right|_{H^{m}(0,T)} 
\\& \leq   C_\Theta
(K_0, |G^{n-1}, {u^{n-1}}_{|_{x=
r
}}|_{H^m(0,T)} , |G^n, {u^n}_{|_{x=
r
}} |_{H^m(0,T)}) \\ &\qquad \qquad   \times (|G^{n}-G^{n-1}|_{H^{m}(0,T)} + |({u^{n}}-{u^{n-1}})_{|_{x=
 r
 }}|_{H^{m}(0,T)})
\end{aligned}
\end{equation}
and, by substituting this into \eqref{estimate_diff_Gn}, we obtain \eqref{estimate_G_diff}.
\end{proof}
\begin{remark}\label{remark_1} 
In Proposition \ref{propo_ode} we derived both $H^m$ and $H^{m+1}$-bounds \eqref{estimate_G_T}-\eqref{estimate_G} although one would look for the solution $G$ to \eqref{general_ODE} in the natural space $H^{m+1}(0,T)$.
 However, in the proof of the uniform boundedness of approximated solutions in Step 2 of Theorem \ref{maintheo}, while both $G^{n}$ and $u^{n}_{|_{x=0}}$ will belong to $H^{m}(0,T)$ by inductive hypothesis, the estimate \eqref{estimate_G} cannot directly guarantee the uniform bound of $G^{n+1}$ in $H^{m+1}(0,T)$ even for a small existence-time $T$. It is therefore crucial in our analysis to use first \eqref{estimate_G_T}, with a time-factor that allows to get the uniform bound in the $H^m$-regularity and, only afterwards, the expected $H^{m+1}$-regularity will be obtained using \eqref{estimate_G}.
\end{remark}

\subsection{Quasilinear hyperbolic IBVPs with semilinear boundary condition}\label{subsection_IBVP}
We now turn to consider the quasilinear hyperbolic initial boundary value problem
\begin{equation}
\begin{aligned}
\begin{cases}
\partial_t u + \mathcal{A}(u)\partial_x u = f(t,x) &\quad\mbox{in}\quad  \Omega_T,\\[5pt]
u(0)=u_0(x) &\quad\mbox{in}\quad  \mathcal{E}_+,\\[5pt]
\mathcal{M}_{
r
}
u_{|_{x=
r
}}=V(G(t)) &\quad\mbox{in}\quad  (0,T),
\\[5pt]
\mathcal{M}_l
u_{|_{x=l}}=g(t) &\quad\mbox{in}\quad  (0,T),
\end{cases}
\end{aligned}
\label{generalIBVP4x4}
\end{equation}
coupled with the evolution equation 
\begin{equation}
\begin{cases}
\dot{G}= \Theta(G, u_{|_{x=
r
}} ),\\[5pt]
G(0)=G_0.
\end{cases}
\label{generalsemilinearODE4x4}
\end{equation}
We require the following assumption:

\begin{assumption}\label{assumWP}
			Let $\mathcal{U}^-$ and $\mathcal{U}^+$ be two open sets in $\R^2$ such that $\mathcal{U}=\mathcal{U}^- \times \mathcal{U}^+$ represents a phase space of $u$. Let $\mathcal{U}^{-}_{r,l} \subset \mathcal{U}^-$  and $\mathcal{U}^+_{r,l} \subset \mathcal{U}^+$ be open sets such that $\mathcal{U}_{r,l}=\mathcal{U}^-_{r,l} \times \mathcal{U}^+_{r,l}$ represents a phase space of $u_{|_{x=r,l}}.$ Let $\mathcal{G}$ be an open set in $\R^2$ representing a phase space of $G$. The following properties are satisfied:
\begin{enumerate}
				\item[$(i)$] $\mathcal{A}\in C^\infty(\mathcal{U})$, $V\in C^\infty(\mathcal{G})$, $\Theta\in C^\infty(\mathcal{G} \times \mathcal{U}_r)$, $\det(\mathcal{M}_{
				r
				} \mathcal{M}_{
				r
				}^T)>0$, and  $  \det(\mathcal{M}_l \mathcal{M}_l^T)>0.$\\
				\item[$(ii)$]  Given $u=(u^-,u^+)^T\in \mathcal{U}$, $\mathcal{A}(u)= \mbox{diag}(-A^-(u^-),A^+(u^+))$ where $A^-(u^-)$ and $A^+(u^+)$ have eigenvalues $\pm\lambda_\pm(u^-)$  and $\pm\lambda_\pm(u^+)$ respectively, with $\lambda_\pm(u^-), \lambda_\pm(u^+) >0$. \\
				\item[$(iii)$]   For any $u_{|_{x={r,l}}}\in \mathcal{U}_{r,l}$, the $2\times 2$ Lopatinski\u{i} matrices  $\mathcal{L}_r(u_{|_{x=r}})$ and $\mathcal{L}_l(u_{|_{x=l}})$, defined as in Assumption \ref{assumWPlin}, are invertible.
\end{enumerate}
\end{assumption}

\paragraph{\textbf{Compatibility conditions}}
We write here the nonlinear version of the compatibility conditions already defined in Subsection \ref{subsection_linearsystem} for the linear problem. In order to guarantee the continuity of the solutions, on the edges $(t,x)=(0,r)$ and $(t,x)=(0,l)$ the initial data $u_0$, $G_0$ and the boundary data $g$ must satisfy
\begin{equation}
		\mathcal{M}_{
		r
		}
		{u_0}_{|_{x=
		r
		}}=V(G_0), \qquad \mathcal{M}_l
		{u_0}_{|_{x=l}}=g_0,
		\label{compatibility-0}\end{equation}
  with $g(0)=g_0.$ Analogously, defining $u_1=\partial_t u(0,x)$, $G_1=\dot{G}(0)$ and $g_1=\dot{g}(0)$, $C^1$-solutions must satisfy \eqref{compatibility-0} together with 
		\begin{equation}
		\mathcal{M}_{
		r
		}
	 {u_1}_{|_{x=
	 r
	 }}=D_V(G_0)G_1, \qquad \mathcal{M}_l
	    {u_1}_{|_{x=l}}=g_1, 
		\label{k1}\end{equation}
  where $D_V$ is the Jacobian matrix of $V$. We remark that $G_1$ can be written as well in terms of $u_0$ and $G_0$ using the ODE \eqref{generalsemilinearODE4x4}, namely
		$$G_1=\Theta(G_0, {u_0}_{|_{x=
		r
		}}).$$
  Hence, we can write \eqref{k1} under the form 
\begin{equation*}
		\mathcal{M}_{
		r
		} {u_1}_{|_{x=r}}=\mathcal{F}_1(G_0,{u_0}_{|_{x=
		r
		}}),
		\end{equation*}
		where  $\mathcal{F}_1(G_0,{u_0}_{|_{x=
		r
		}}) = D_V(G_0)\Theta(G_0,{u_0}_{|_{x=r}})$. More generally, let us define  $u_k=\partial_t^k u(0,x)$, $G_k= G^{(k)}(0)$ and $g_k= g^{(k)}(0)$ for $k\geq 1$. Then, smooth enough solutions must satisfy \eqref{compatibility-0} together with
\begin{equation*}
	\mathcal{M}_r {u_k}_{|_{x=r}}=\mathcal{F}_k(G_0,...,G_{k-1}, {u_0}_{|_{x=r}},..., {u_{k-1}}_{|_{x=
	r
	}}), \qquad \mathcal{M}_l {u_k}_{|_{x=l}}=g_k,
\label{compatibility-k}
\end{equation*}
where $\mathcal{F}_k$ is a smooth function of its arguments.\\ Let us now define $f_k=\partial_t^k f(0,x).$ On the one hand, using the evolution equation in \eqref{generalIBVP4x4} and applying an inductive argument, we can write ${u_k}$ as a function of the initial data $u_0$ and the source term $f$ only, namely
\begin{equation}
{u_{k}}=\mathcal{C}_{k}(u_0, f_0, ..., f_{k-1}) \qquad \mbox{for} \quad k\geq 1,
		\label{uk}
\end{equation}
where $\mathcal{C}_{k}(u_0, f_0, \!...\!, f_{k-1})$ is a smooth function of $u_0$, $\partial_x^{j+1} u_0$, and $\partial_x ^{k-1-j}f_j$ for $j\!=\!0,...,k-1$. 
On the other hand, using the ODE \eqref{generalsemilinearODE4x4}, \eqref{uk} and an inductive argument, we can write $G_k$ as a function of the data $G_0$, $u_0$ and $f$ only, that is 
\begin{equation}
G_1= \mathcal{B}_{1}(G_0,u_0
),\qquad G_{k}= \mathcal{B}_{k}(G_0,u_0, f_{0},  ..., f_{k-2}) \qquad \mbox{for} \quad k\geq 2,
\label{Gk}
\end{equation}
where $\mathcal{B}_{1}(G_0,u_0)$ is a smooth function of $G_0$, ${u_0}_{|_{x=r}}$ and $\mathcal{B}_{k}(G_0,u_0, f_{0},  ..., f_{k-2})$ is a smooth function of $G_0$, ${u_0}_{|_{x=
r
}}$, $(\partial_x^{j+1} u_0)_{|_{x=
r
}}$, $(\partial_x^{k-2-j} f_j)_{|_{x=
r
}}$ for $j=0,...,k-2.$
This permits us to introduce the next definition.
		
\begin{definition}\label{compatibility}
Let $m\geq 1$ be an integer. The data $u_0\in H^m(\mathcal{E}_+)$, $f\in H^m(\Omega_T)$, $g\in H^m(0,T)$ and $G_0\in\R$ of the initial boundary value problem \eqref{generalIBVP4x4}-\eqref{generalsemilinearODE4x4} satisfy the compatibility conditions up to order $m-1$ if 
\begin{equation*}
\begin{aligned}
&\mathcal{M}_{r} {u_0}_{|_{x=r}}=V(G_0), \\
&\mathcal{M}_{r} {u_k}_{|_{x=r}}=\mathcal{F}_k(G_0,...,G_{k-1}, {u_0}_{|_{x=r}},..., {u_{k-1}}_{|_{x=r}}) \quad\mbox{for}\quad k=1,...,m-1,
\label{compa}
\end{aligned}
\end{equation*}
and 
\begin{equation*}
    \mathcal{M}_l {u_k}_{|_{x=l}}=g_k \quad\mbox{for}\quad k=1,...,m-1.
\end{equation*}
\end{definition}	
We are now able to state a well-posedness result for an initial boundary value problem with a semi-linear boundary condition.
\begin{theorem}\label{maintheo}
			Let $m\geq 2$ be an integer. Assume that Assumption \ref{assumWP} holds and that $u_0\in H^m(
\mathcal{E}_+
			)$ takes values in $\mathcal{K}_0^-\times\mathcal{K}_0^+$ with $\mathcal{K}_0^-\subsetneq \mathcal{U}^-$ and $\mathcal{K}_0^+\subsetneq \mathcal{U}^+$ compact and convex sets, ${u_0}_{|_{x=r,l}}\in \mathcal{U}_{r,l}$ and $G_0\in \mathcal{G}$. Moreover, suppose that $u_0$, $f\in H^m(\Omega_T)$, $g\in H^m(0,T)$ and $G_0$ satisfy the compatibility conditions up to order $m-1$ in the sense of Definition \ref{compatibility}. Then,
			there exist $0<T_1\leq T$ and a unique solution $(u,G)$ to \eqref{generalIBVP4x4}-\eqref{generalsemilinearODE4x4} with $u\in\mathbb{W}^m(T_1)$ and $G\in H^{m+1}(0,T_1).$ Moreover $|u_{|_{x=
		r
			,l}}|_{m,T_1}$ is finite.
\end{theorem}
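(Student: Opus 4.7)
The plan is to prove Theorem \ref{maintheo} by a Picard-type iterative scheme that decouples the PDE and the ODE at each step: bound the sequence uniformly in the target space $\mathbb{W}^m(T)\times H^{m+1}(0,T)$ by combining the linear theory of Theorem \ref{thm_Iguchi_lannes} with the ODE estimates of Proposition \ref{propo_ode}, then extract strong convergence in a low-regularity norm via a contraction argument for $T$ small enough, and finally identify the limit as the unique solution. Concretely, I would first construct an initial iterate $(u^0,G^0)$ consistent with the compatibility conditions, by extending in $t$ the formal time derivatives $u_k$, $G_k$ determined by the original data through \eqref{uk}--\eqref{Gk}. For $n\ge 0$ I define $G^{n+1}$ as the unique solution of the ODE $\dot G^{n+1}=\Theta(G^{n+1},u^n_{|_{x=0}})$ with $G^{n+1}(0)=G_0$, and then $u^{n+1}$ as the unique solution of the linear IBVP obtained from \eqref{generalIBVP4x4} by freezing the coefficients $\mathcal{A}(u^n)$ and taking as boundary datum $V(G^{n+1}(t))$; Theorem \ref{thm_Iguchi_lannes} applies as long as the linear compatibility conditions of Definition \ref{lin_compatibility} hold at each step, which follows by induction from the nonlinear compatibility conditions on the original data together with the choice of $(u^0,G^0)$.

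For the uniform bound, fix $R$ larger than the norms of $(u^0,G^0)$ and assume inductively that $\|u^n\|_{\mathbb{W}^m(T)}+|u^n_{|_{x=0}}|_{m,T}+|G^n|_{H^{m+1}(0,T)}\le R$. Because $m\ge 2>1/2+1$, Sobolev embedding makes $u^n\in W^{1,\infty}(\Omega_T)$ with bound depending only on $R$, and for $T$ small $u^n$ stays in a compact subset of the phase space $\mathcal{U}$ of Assumption \ref{assumWP}; in particular Assumption \ref{assumWPlin} holds with constants depending only on $R$. Proposition \ref{propo_ode} then yields $|G^{n+1}|_{H^m(0,T)}\le \max\{T,\sqrt T\}\,C(R)$, from which $|G^{n+1}|_{H^{m+1}(0,T)}$ is controlled by plugging back into the ODE and invoking Lemma \ref{lemma_moser_sobolev}; the linear estimate \eqref{estimate_thm1} then bounds $\|u^{n+1}\|_{\mathbb{W}^m(T)}+|u^{n+1}_{|_{x=0}}|_{m,T}$ by $C(R)e^{C(R)T}(\|u_0\|_m+|V(G^{n+1})|_{H^m(0,T)}+\cdots)$. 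Choosing $T$ sufficiently small absorbs the $C(R)e^{C(R)T}$ and $\max\{T,\sqrt T\}$ factors and closes the induction at level $R$.

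For convergence, I would estimate $\delta u^n=u^{n+1}-u^n$ and $\delta G^n=G^{n+1}-G^n$, which satisfy a linear IBVP with zero initial data, source term $-(\mathcal{A}(u^n)-\mathcal{A}(u^{n-1}))\partial_x u^n$, boundary datum $V(G^{n+1})-V(G^n)$, coupled with the ODE difference $\dot{\delta G}^n = \Theta(G^{n+1},u^n_{|_{x=0}})-\Theta(G^n,u^{n-1}_{|_{x=0}})$. Combining the $L^2$ version of \eqref{estimate_thm1} (the case $m=0$), the ODE difference bound \eqref{estimate_G_diff}, and Moser-type estimates backed by the uniform high-norm bound, I would obtain
\begin{equation*}
\|\delta u^n\|_{\mathbb{W}^0(T)}+|\delta u^n_{|_{x=0}}|_{0,T}+|\delta G^n|_{H^1(0,T)}\le C(R)\max\{T,\sqrt T\}\bigl(\|\delta u^{n-1}\|_{\mathbb{W}^0(T)}+|\delta u^{n-1}_{|_{x=0}}|_{0,T}+|\delta G^{n-1}|_{H^1(0,T)}\bigr),
\end{equation*}
a contraction for $T$ small. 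This gives a Cauchy sequence in $\mathbb{W}^0(T)\times H^1(0,T)$; interpolation with the uniform high-norm bound upgrades it to strong convergence in $\mathbb{W}^{m'}(T)\times H^{m'+1}(0,T)$ for every $m'<m$, which is sufficient to pass to the limit in all nonlinear terms, while weak-$\ast$ compactness places $(u,G)$ in $\mathbb{W}^m(T)\times H^{m+1}(0,T)$; uniqueness follows from the same contraction applied to two putative solutions.

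The crux of the argument is closing the high-norm bound: the linear PDE estimate feeds back on itself through $|V(G^{n+1})|_{H^m(0,T)}$, which by Lemma \ref{lemma_moser_sobolev} involves $|G^{n+1}|_{H^m(0,T)}$; estimating $G^{n+1}$ directly from the ODE would merely produce $|\Theta(G^n,u^n_{|_{x=0}})|_{H^m(0,T)}$, a quantity of order $R$ with no smallness in $T$ (Remark \ref{remark_1}). The way out is the time-weighted estimate \eqref{estimate_G}: the prefactor $\max\{T,\sqrt T\}$ makes $|G^{n+1}|_{H^m(0,T)}$ small in $T$, and only then does the feedback loop close without blowing up $R$. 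The fact that Theorem \ref{thm_Iguchi_lannes} controls the trace $u^{n+1}_{|_{x=0}}$ at the same regularity as $u^{n+1}$ itself — the payoff of Kreiss symmetrizability — is what makes this whole scheme feasible in the first place, since the ODE's right-hand side demands exactly such a trace control.
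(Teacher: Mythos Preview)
Your strategy is the same as the paper's --- Picard iteration, high-norm boundedness via Theorem \ref{thm_Iguchi_lannes} combined with time-weighted ODE estimates, low-norm contraction, interpolation --- but two technical choices differ, and one of them creates a small inconsistency you should fix.

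First, your ODE iteration is \emph{implicit}: you set $\dot G^{n+1}=\Theta(G^{n+1},u^n_{|_{x=0}})$, whereas the paper (and Proposition \ref{propo_ode}, equation \eqref{equation_Gn}) uses the \emph{explicit} scheme $\dot G^{n+1}=\Theta(G^{n},u^n_{|_{x=0}})$. Both are legitimate, but you then invoke Proposition \ref{propo_ode} and \eqref{estimate_G_diff} verbatim, which are written for the explicit scheme. For your implicit scheme the analogous bounds hold (Gr\"onwall gives $|\delta G^n|_{L^2}\le C(R)\sqrt{T}\,|\delta u^{n-1}_{|_{x=0}}|_{L^2}$ directly, and then the ODE recovers $|\dot{\delta G}^n|_{L^2}$), but note that $|\delta G^n|_{H^1}$ will not carry the $\sqrt{T}$ factor on its own; the contraction closes only in the $u$-variables, with $\delta G^n$ slaved to $\delta u^{n-1}$. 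Your displayed contraction inequality, where $|\delta G^n|_{H^1}$ sits on the left with a global $\sqrt{T}$ prefactor, is therefore not quite right as stated. Either switch to the paper's explicit iteration (so that \eqref{estimate_G_diff} applies literally and $|\delta G^{n-1}|$ appears on the right with a genuine $\sqrt{T}$), or rewrite the contraction for $(\delta u^n,\delta u^n_{|_{x=0}})$ alone.

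Second, you run the contraction at the $L^2$ level ($\mathbb{W}^0$), while the paper contracts in $\mathbb{W}^{m-1}\times H^{m-1}$. Your choice is perfectly sound and arguably cleaner, but be aware that Theorem \ref{thm_Iguchi_lannes} is stated only for $m\ge 1$; the $L^2$ estimate you need (no trace term on $f$) is the base case mentioned in its proof sketch and must be quoted separately. The paper's $\mathbb{W}^{m-1}$ contraction stays within the range of Theorem \ref{thm_Iguchi_lannes} at the cost of heavier product estimates on the source $f^n$; interpolation then gives convergence in $\mathbb{W}^{m-1}\cap W^{1,\infty}$, which is what is actually needed to pass to the limit in $\mathcal{A}(u^n)\partial_x u^{n+1}$.
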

\begin{proof}
\textbf{Step 1: Choice of the iterative scheme.}
Let $\mathcal{K}_1^-,\mathcal{K}_1^+$ be two compact and convex sets in $\R^2$ such that $\mathcal{K}_0^-\times\mathcal{K}_0^+\Subset\mathcal{K}_1^-\times\mathcal{K}_1^+\Subset \mathcal{U}^-\times \mathcal{U}^+$ (compactly contained) and let $\mathcal{K}^{-}_{r,l,1}\times \mathcal{K}^+_{r,l,1}$ be a compact set in $\mathcal{U}_{r,l}$. Let $\mathcal{G}_1$ be a compact set in $\mathcal{G}$. Then, there exists a constant $c_0>0$ such that,  for any $u=(u^-,u^+)^T\in \mathcal{K}_1^-\times \mathcal{K}_1^+$ and $u_{|_{x={r,l}}}\in \mathcal{K}^{-}_{r,l,1}\times \mathcal{K}^+_{r,l,1}$,
\begin{align*}
\lambda_\pm(u^-)\geq c_0, \ \ \lambda_\pm(u^+) \geq c_0,  \ \ 
\|\mathcal{L}_r( u_{|_{x=r}})^{-1}\|_{\R^2\rightarrow \R^2}\leq \frac{1}{c_0}, \ \ \|\mathcal{L}_l( u_{|_{x=l}})^{-1}\|_{\R^2\rightarrow \R^2}\leq \frac{1}{c_0}.
\end{align*}
We construct a solution $(u,G)$, where $u$ takes values in $\mathcal{K}_1^-\times\mathcal{K}_1^+$, their traces $u_{|_{x={r,l}}}$ take values in $\mathcal{K}^{-}_{r,l,1}\times \mathcal{K}^+_{r,l,1}$ and $G$ takes values in $\mathcal{G}_1$. Indeed,  
there exists $\kappa_0>0$ such that, if
$\|u-u_0\|_{L^\infty(\mathcal{E}_+)}\leq \kappa_0$, we  have $u(x)\in \mathcal{K}_1^-\times\mathcal{K}_1^+$ for all $x\in 
\mathcal{E}_+.$ To do this, we use an iterative scheme argument. More precisely, we look for the solution as a limit of the sequence $(u^n, G^n)_{n\in\mathbb{N}}$, which solves
\begin{equation}
\begin{aligned}
\begin{cases}
\partial_t {u^{n+1}} + \mathcal{A}(u^n)\partial_x {u^{n+1}} = f(t,x)&\quad\mbox{in}\quad  \Omega_T,\\[5pt]
{u^{n+1}}(0)=u_0(x) &\quad\mbox{on}\quad   \mathcal{E}_+,\\[5pt]
\mathcal{M}_{r}{u^{n+1}}_{|_{x=r}}=V(G^{n+1}(t)) &\quad\mbox{on}\quad  (0,T),\\[5pt]
\mathcal{M}_l {u^{n+1}}_{|_{x=l}}=g(t) &\quad\mbox{on}\quad  (0,T),
\end{cases}
\end{aligned}
\label{iterativePDE}
\end{equation}
coupled with 
\begin{equation}
\begin{cases}
\dot{G}^{n+1}= \Theta(G^n, {u^n}_{|_{x=
r
}} ),\\[5pt]
G^{n+1}(0)=G_0.
\end{cases}
\label{iterativeODE}
\end{equation}
We choose the first iterate $(u^0, G^0)$ with a function $u^0\in H^{m+1}(\R \times \mathcal{E}_+)$ such that $(\partial_t^k u^0)(0,x)= u_k$  for $0\leq k\leq m$ with $u_k$ defined by \eqref{uk} and a function $G^0\in H^{m+1}(0,T)$ such that $(G^0)^{(k)}(0)=G_k$ with $G_k$ defined by \eqref{Gk}. The compatibility conditions are then satisfied by the data $u_0$ and $G_0$ for the linear initial boundary value problem for the unknown $({u^{n+1}}, G^{n+1})$. 
By construction, both quantities $$\vertiii{u^n(0)}_m=\sum\limits_{j=0}^m \|(\partial_t^ju^n)(0,\cdot)\|_{H^{m-j}(
\mathcal{E}_+
)}=\sum\limits_{j=0}^m \|u_j\|_{H^{m-j}(
 \mathcal{E}_+
)}, \qquad \sum\limits_{j=0}^m|(G^{n})^{(j)}(0)|=\sum\limits_{j=0}^m|G_j|,$$
are independent of $n$. Moreover, there exists $K_0>0$ such that
\begin{equation*}
\begin{aligned}
		&\frac{1}{c_0}, \ \vertiii{u^n(0)}_m+ |g|_{H^m(0,T)} + |f_{|_{x=
		r
		,l}}|_{m-1, T} + \int_{0}^{T} \vertiii{f(t)}_{m} dt,\\[2pt]& \|\mathcal{A}\|_{L^\infty(\mathcal{K}_1^-\times\mathcal{K}_1^+)}, \ \|\mathcal{M}_{
		r
		}\|_{\R^4\rightarrow \R^2},\|\mathcal{M}_l\|_{\R^4\rightarrow \R^2}, \ \sum\limits_{j=0}^m|(G^{n})^{(j)}(0)|\leq K_0.
\end{aligned}
\end{equation*}
\textbf{Step 2: High-norm boundedness.}
We want to show that the sequence $(u^n, G^n)_{n\in\mathbb{N}}$ is bounded in  $\mathbb{W}^{m}(T_1)\times H^{m+1}(0,T_1)$. We claim that for $M>0$ sufficiently large (to be determined later) and $0<T_1\leq T$ sufficently small  we have for all $n\in\mathbb{N}$:
\begin{equation}
\begin{cases}
\|u^n\|_{\mathbb{W}^m(T_1)}+ |{u^n}_{|_{x=
r
,l}}|_{m,T_1} + |G^n|_{H^{m+1}(0,T_1)}\leq M,\\[5pt]
\|u^n(t,\cdot)-u_0\|_{L^\infty(\mathcal{E}_+)}\leq \kappa_0 \quad\mbox{for}\ 0\leq t\leq T_1.
\end{cases}
\label{boundsequence}
\end{equation}
Let us first prove by an induction argument that for all $n\in \mathbb{N}$
\begin{equation}
\begin{cases}
\|u^n\|_{\mathbb{W}^m(T_1)}+ |{u^n}_{|_{x=
r
,l}}|_{m,T_1} + |G^n|_{H^{m}(0,T_1)}\leq \widetilde{M},\\[5pt]
\|u^n(t,\cdot)-u_0\|_{L^\infty(\mathcal{E}_+)}\leq \kappa_0 \quad\mbox{for}\ 0\leq t\leq T_1.
\end{cases}
\label{boundsequence_tilde}
\end{equation}
%We shall prove this assertion by induction. 
For the first iterate $n=0$ we have 
\begin{equation*}
\|u^0\|_{\mathbb{W}^m(T_1)}+ |{u^0}_{|_{x=
r
,l}}|_{m,T_1}+ |G^0|_{H^{m}(0,T_1)}\leq C(K_0),
\end{equation*}
for some constant $C(K_0)>0$ depending on $K_0$. 
Hence the first bound in \eqref{boundsequence_tilde} holds choosing $\widetilde{M}\geq C(K_0)$. Moreover, 
\begin{equation*}
			\|u^{0}(t,\cdot)-u_0\|_{L^\infty(\mathcal{E}_+)}\leq T_1 C\|u^{0}\|_{\mathbb{W}^2(T_1)}\leq T_1 C \widetilde{M},
\end{equation*}
and the second bound in \eqref{boundsequence_tilde} holds for $T_1\leq 
\frac{\kappa_0}{C \widetilde{M}}
$. We show now that \eqref{boundsequence_tilde} holds at step $n+1$ if it holds at step $n$. By interpolation, we have 
$$\|{u^{n}}\|^2_{W^{1,\infty}(\Omega_{T_1})}\leq C
\|{u^{n}}\|_{\mathbb{W}^{m-1}(T_1)} \|{u^{n}}\|_{\mathbb{W}^{m}(T_1)} \leq C(\widetilde{M}),$$
for some constant $C(\widetilde{M})>0$. By Theorem \ref{thm_Iguchi_lannes}, there exists a unique solution ${u^{n+1}}\in\mathbb{W}^m(T_1)$ to the initial boundary value problem \eqref{iterativePDE}. In addition, taking the supremum of \eqref{estimate_thm1} over $[0,T_1]$, the following estimate holds 
\begin{equation}\label{u_trace_n+1}
\begin{aligned}
	\|{u^{n+1}}\|_{\mathbb{W}^m (T_1)}+ |{u^{n+1}}_{|_{x=
	r
	,l}}|_{m,T_1} 
	\leq  C(K_0)e^{C(\widetilde{M})T_1} \left(1 +  |V(G^{n+1})|_{H^m(0,T_1)}\right).
\end{aligned}
\end{equation}
Arguing as in Step 1 of the proof of Proposition \ref{propo_ode} and using the first estimate in Lemma \ref{lemma_moser_sobolev} yield
\begin{equation*}
\begin{aligned}
|V(G^{n+1})|_{H^{m}(0,T_1)}
& \leq C_V (K_0, \sqrt{T_1} |G^{n+1}|_{H^{m}(0,T_1)})
|G^{n+1}|_{H^m(0,T_1)}
\end{aligned}
\end{equation*} 
while the bound \eqref{estimate_G_T} together with the inductive hypothesis \eqref{boundsequence_tilde} at step $n$ gives 
\begin{equation*}
|V(G^{n+1})|_{H^{m}(0,T_1)}\leq \sqrt{T_1}C_{V,\Theta}(K_0,T_1, \widetilde{M}).
\end{equation*}
Then, by choosing $T_1$ sufficiently small, we obtain  
\begin{equation*}
	\|{u^{n+1}}\|_{\mathbb{W}^m (T_1)}+ |{u^{n+1}}_{|_{x=
	r
	,l}}|_{m,T_1}  + |G^{n+1}|_{H^m(0,T_1)}
	\leq  2C(K_0)
\end{equation*}
and the first uniform bound in \eqref{boundsequence_tilde} is proved for all $n\in \mathbb{N}$ after setting $\widetilde{M}=2C(K_0)$. Moreover, 
\begin{equation*}
			\|{u^{n+1}}(t,\cdot)-u_0\|_{L^\infty(\mathcal{E}_+)}\leq T_1 \ C\|{u^{n+1}}\|_{\mathbb{W}^2(T_1)}\leq T_1 C \widetilde{M}\leq \kappa_0,
\end{equation*}
and the second uniform bound in \eqref{boundsequence_tilde} is proved for all $n\in\N$.\\
%\begin{aligned}
%|V(G^{n+1})|_{H^{m}(0,T_1)}
%& \leq C_V (K_0, \sqrt{T_1} |G^{n+1}|_{H^{m}(0,T_1)})
%|G^{n+1}|_{H^m(0,T_1)}\\
%& \leq \sqrt{T_1} C_V\left(K_0,T_1, |G^{n+1}|_{H^{m+1}(0,T_1)}\right).
%\end{aligned}
%\end{equation*}
Now, in order to improve the regularity for $G^{n}$ to $H^{m+1}$ and prove the uniform bound \eqref{boundsequence}, we resort to \eqref{estimate_G}. Indeed, using  \eqref{boundsequence_tilde} yields 
\begin{equation*}\begin{aligned}
    |G^{n+1}|_{H^{m+1}(0,T_1)}&\leq \sqrt{T_1}C(K_0)+ (T_1+1)C_{\Theta}( K_0,|G^n, {u^n}_{|_{x=r}}|_{H^m(0,T_1)})\\
    &\leq \sqrt{T_1}C(K_0)+ (T_1+1)C_{\Theta}(K_0,\widetilde{M})
    \end{aligned}
\end{equation*}
and, for $T_1$ sufficiently small, 
$$|G^{n+1}|_{H^{m+1}(0,T_1)}\leq C_{\Theta}( K_0,\widetilde{M}).$$ Thus, we obtain
\begin{equation*}
\begin{aligned}
 \|{u^{n}}\|_{\mathbb{W}^m (T_1)}+ |{u^{n}}_{|_{x=r,l}}|_{m,T_1} + |G^{n}|_{H^{m+1}(0,T_1)}
 \leq \widetilde{M} + C_{\Theta}(K_0, \widetilde{M}),
\end{aligned}
\end{equation*}which proves \eqref{boundsequence} for all $n\in\mathbb{N}$ with $M=\widetilde{M} + C_{\Theta}(K_0, \widetilde{M})$.\\

\noindent\textbf{Step 3: Low-norm convergence.}
We show that $(u^n, G^n)$ is a convergent sequence in the $\mathbb{W}^{m-1}(T_1)\times H^{m-1}(0,T_1)$-norm. The initial boundary value problem for the difference ${u^{n+1}}-u^n$ reads
\begin{equation}
\begin{aligned}
\begin{cases}
			\partial_t ({u^{n+1}}-u^n) + \mathcal{A}(u^n)\partial_x ({u^{n+1}}-u^n) = f^n&\quad\mbox{in}\quad  \Omega_T,\\[5pt]
			({u^{n+1}}-u^n)
			(0)
			=0&\quad\mbox{on}\quad  {\mathcal{E}_+},\\[5pt]
			\mathcal{M}_r ({u^{n+1}}-u^n)_{|_{x=r}}=V(G^{n+1}(t))-V(G^{n}(t)) &\quad\mbox{on}\quad  (0,T),\\[5pt]
			\mathcal{M}_l ({u^{n+1}}-u^n)_{|_{x=l}}=0 &\quad\mbox{on}\quad  (0,T),
\end{cases}
\end{aligned}
\label{iterativePDEdiff}
\end{equation}
with source term  $f^n=-\left(\mathcal{A}(u^n)-\mathcal{A}({u^{n-1}})\right)\partial_x u^n.$
Applying Theorem \ref{thm_Iguchi_lannes} to \eqref{iterativePDEdiff} and taking the supremum over $[0,T_1]$,  we get
\begin{equation*}
\begin{aligned}
&\|{u^{n+1}}-u^n\|_{\mathbb{W}^{m-1} (T_1)}  + |({u^{n+1}}-u^n)_{|_{x=r,l}}|_{m-1, T_1} \\
&\leq   C(K_0)e^{C(M)T_1} \big(|V(G^{n+1})-V(G^{n})|_{H^{m-1}(0, T_1)}  + |f^n_{|_{x=r,l}}|_{m-2, T_1} + T_1\|f^n\|_{\mathbb{W}^{m-1} (T_1)}  \big). 
\end{aligned}
\end{equation*}
We estimate the right-hand side using Lemma \ref{lemma_appen_convergence} in Appendix and we get 
\begin{align*}
    &\|{u^{n+1}}-u^n\|_{\mathbb{W}^{m-1}(T_1)} + |({u^{n+1}}-u^n)_{|_{x=r,l}}|_{m-1,T_1}\leq C_{V, \Theta}(K_0, M)e^{C(M)T_1} T_1  \\
    &\times 
\big(\|{u^{n}}-{u^{n-1}}\|_{\mathbb{W}^{m-1}(T_1)}+ |({u^{n}}-{u^{n-1}})_{|_{x=r,l}}|_{m-1,T_1}+ |G^{n}-G^{n-1}|_{H^{m-1}(0,T_1)}\big).
\end{align*}
Using \eqref{estimate_G_diff} yields
\begin{equation*}
|G^{n+1} - G^{n}|_{H^{m-1}(0,T_1)}
\leq T_1 C_{\Theta}(M)
\left(|G^{n}-G^{n-1}|_{H^{m-1}(0,T_1)} +|({u^{n}}-{u^{n-1}})_{|_{x=r}}|_{m-1,T_1}\right), 
\end{equation*}
thus we obtain
\begin{equation*}
\begin{aligned}
&\|{u^{n+1}}-u^n\|_{\mathbb{W}^{m-1}(T_1)} +|({u^{n+1}}-u^n)_{|_{x=r,l}}|_{m-1,T_1}+ |G^{n+1}-G^{n}|_{H^{m-1}(0,T_1)}\\&\leq C(K_0, M)e^{C(M)T_1} T_1 \\&\quad \times \big(\|{u^{n}}-{u^{n-1}}\|_{\mathbb{W}^{m-1}(T_1)} + |({u^{n}}-{u^{n-1}})_{|_{x=r,l}}|_{m-1,T_1}+ |G^{n}-G^{n-1}|_{H^{m-1}(0,T_1)}\big).
\end{aligned}
\end{equation*}
Hence, by taking $T_1$ sufficiently small, we get 
\begin{equation*}
\begin{aligned}
& \|{u^{n+1}}-u^n\|_{\mathbb{W}^{m-1}(T_1)} + |({u^{n+1}}-u^n)_{|_{x=r,l}}|_{m-1,T_1}+ |G^{n+1}-G^{n}|_{H^{m-1}(0,T_1)}\\
& \leq \frac{1}{2} \left(\|{u^{n}}-{u^{n-1}}\|_{\mathbb{W}^{m-1}(T_1)}+ |({u^{n}}-{u^{n-1}})_{|_{x=r,l}}|_{m-1,T_1}+ |G^{n}-G^{n-1}|_{H^{m-1}(0,T_1)}\right). 
\end{aligned}
\end{equation*}
Thus, $(u^n,G^n)$ is a Cauchy sequence and converges in $\mathbb{W}^{m-1}(T_1) \times H^{m-1}(0,T_1)$  
to a limit $(u, G)$.\\
\textbf{Step 4: Regularity and uniqueness}. We have the following two interpolation inequalities
$$\|{u^{n+1}}-{u^{n}}\|^2_{W^{1,\infty}(\Omega_{T_1})}\leq C
\|{u^{n+1}}-{u^{n}}\|_{\mathbb{W}^{m-1}(T_1)} \|{u^{n+1}}-{u^{n}}\|_{\mathbb{W}^{m}(T_1)},$$
and
$$|G^{n+1}-G^{n}|^2_{H^{m}(0,T_1)}\leq C
|G^{n+1}-G^{n}|_{H^{m-1}(0,T_1)} |G^{n+1}-G^{n}|_{H^{m+1}(0,T_1)}.$$
From the uniform boundedness of $(u^n,G^n)$ in $\mathbb{W}^{m}(T_1) \times H^{m+1}(0,T_1)$ and the convergence of $(u^n,G^n)$ in  $\mathbb{W}^{m-1}(T_1) \times H^{m-1}(0,T_1)$,  
we can conclude that $(u^n,G^n)$ converges to $(u,G)$ in $\left(\mathbb{W}^{m-1}(T_1)\cap W^{1,\infty}(\Omega_{T_1})\right) \times H^{m}(0,T_1)$ and $(u, G)$ is a solution to
\eqref{generalIBVP4x4}-\eqref{generalsemilinearODE4x4}. By standard compactness arguments we have $$\|u\|_{\mathbb{W}^m(T_1)} + |u_{|_{x=r,l}}|_{m,T_1}
+ |G|_{H^{m+1}(0,T_1)}\leq M,$$ and the uniqueness of the solution is obtained via a standard energy estimate argument applied to the initial boundary value problem satisfied by the difference of two solutions. 
\end{proof}

\subsection{Well-posedness of the transmission problem across the structure side-walls}\label{subsection_well-posedness}
As a direct consequence of Theorem \ref{maintheo}, we can now prove Theorem \ref{theomainresult}, which states the well-posedness result of the transmission problem \eqref{transproblemcomplete}-\eqref{initial-cond} describing the interaction between the waves and the partially-immersed structure in the OWC device. Let us recall the statement below in Theorem \ref{thm_owc} to be more complete. Before giving its proof, we need to introduce the following assumption on the initial data.

\begin{assumption}\label{assOWC}There exists $c_0>0$ such that the initial data $(\zeta_0,q_0)$ satisfy:
            \begin{equation*}
                 g(h_0+\zeta_0(x))-\dfrac{q_0^2(x)}{(h_0+\zeta_0(x))^2}\geq c_0 \qquad \forall  x\in \mathcal{E}.
            \end{equation*}
\end{assumption}
\begin{theorem}\label{thm_owc}
Let $m\geq 2$ be an integer and $(\zeta_0, q_0)\in H^m(\mathcal{E})$ be such that  Assumption \ref{assOWC} holds. Moreover, suppose that $(\zeta_0,q_0)$, $(q_{i,0}, P_\mathrm{ch,0})\in\R^2$ and  $\zeta_{\rm ent}\in H^m(0,T)$ satisfy the compatibility conditions in Definition \ref{compatibility} up to order $m-1$. Then there exists $0<T_1\leq T$ and unique solution $(\zeta, q, q_i, P_\mathrm{ch})$ to
\eqref{transproblemcomplete}-\eqref{initial-cond} with $(\zeta, q)\in \mathbb{W}^m(T_1)$ and $(q_i,P_\mathrm{ch})\in H^{m+1}(0,T_1)$, where
$\mathbb{W}^m(T_1)$ denotes the same space as in \eqref{defWm} but defined in the spatial domain $\mathcal{E}$. Moreover, $|(\zeta, q)_{|_{x=\pm r, \pm l}}|_{m,T_1}$ is finite.
\end{theorem}

\begin{proof}
In order to apply Theorem \ref{maintheo}, we need to show that the conditions $(i)$-$(iii)$ in Assumption \ref{assumWP} are satisfied.
The  condition $(i)$ holds from the definition of $\mathcal{A}$, $V$, $\Theta$ in \eqref{IBVP4x4}-\eqref{semilinearODE4x4} and the boundary matrices 
$$\mathcal{M}_r=
\left(\begin{matrix}
			0& -1& 0&1 \\[5pt]
		0&\frac{1}{2}&0&\frac{1}{2}
				\end{matrix}
				\right), \qquad 
			\mathcal{M}_l=	\left(\begin{matrix}
			1& 0& 0&0 \\[5pt]
		0&0&0&1
				\end{matrix}
				\right).
$$
After remarking that the eigenvalues of $A(u)$ in \eqref{transpb2x2} are 
\begin{equation*}
    \pm \lambda_{\pm}(u)=\dfrac{q}{h_0+\zeta} \pm \sqrt{g(h_0+\zeta)},
\end{equation*}
Assumption \ref{assOWC} implies the condition $(ii)$. Therefore we only need to verify the condition $(iii)$. Let us recall that the unit eigenvectors of $A(u)$ associated with the eigenvalues $\pm\lambda_\pm(u)$ are respectively 
$$
e_\pm(u)=\frac{1}{\sqrt{1+ |\lambda_\pm(u)|^2}}\left(1, \pm\lambda_\pm(u) \right)^T.
$$
From the definition of the Lopatinski\u{i} matrices and writing $u$ as $u^-$ in $\mathcal{E}_-$ and as $u^+$ in $\mathcal{E}_+$, we obtain that the Lopatinski\u{i} matrices for the $4\times 4$ system \eqref{IBVP4x4} associated with \eqref{transproblemcomplete}-\eqref{equation_P_q} are
$$\mathcal{L}_r(u_{
|_{x=r}
})=\left(\begin{matrix}
 			\frac{\lambda_-\left({u^-}_{
 			|_{x=r}
 			}
 		\right)}{\sqrt{1+\left|\lambda_-\left({u^-}_{
 		|_{x=r}
 		}\right)\right|^2}}& \frac{\lambda_+\left({u^+}_{
 		|_{x=r}
 		}\right)}{\sqrt{1+\left|\lambda_+\left({u^+}_{
 		|_{x=r}
 		}\right)\right|^2}}  \\[13pt]
	\frac{-\lambda_-\left({u^-}_{
	 |_{x=r}
	}\right)}{2\sqrt{1+\left|\lambda_-\left({u^-}_{
	 |_{x=r}	}\right)\right|^2}}&\frac{\lambda_+\left({u^+}_{
	|_{x=r}
	}\right)}{2\sqrt{1+\left|\lambda_+\left({u^+}_{
	 |_{x=r}
	}\right)\right|^2}}
				\end{matrix}
				\right)$$
    and			$$\mathcal{L}_l(u_{|_{x=l}})=\left(\begin{matrix}
				\frac{1}{\sqrt{1+\left|\lambda_+\left({u^-}_{|_{x=l}}\right)\right|^2}}& 0 \\[10pt]
	0&	\frac{-\lambda_-\left({u^+}_{|_{x=l}}\right)}{\sqrt{1+\left|\lambda_-\left({u^+}_{|_{x=l}}\right)\right|^2}}
				\end{matrix}
				\right).\vspace{1em}$$ 
				From Assumption \ref{assOWC} we know that $\mathcal{L}_r(u_{
		|_{x=r}
				})$ and $\mathcal{L}_l(u_{|_{x=l}})$ are invertible, yielding the condition $(iii)$.
Then, the well-posedness result follows from Theorem \ref{maintheo}.
\end{proof}

\begin{appendices}

\renewcommand\sectionname{Appendix}
\section{Some technical estimates}
In this appendix, we prove some technical estimates that we have omitted in the proof of Theorem \ref{maintheo} for the sake of readability.

\begin{lemma}\label{lemma_appen_convergence}
Let $m\geq 2$ be an integer and assume that $u^n$, ${u^n}_{|_{x=r}}$ and $ G^n$ take values in compact and convex sets respectively in $\mathcal{U}$, $\mathcal{U}_r$ and $\mathcal{G}$ for all $n\in \mathbb{N}$. Suppose that Assumption \ref{assumWP} and the uniform bound \eqref{boundsequence} hold.  Then:
\begin{align} 
    	&| \left(\mathcal{A}(u^n)-\mathcal{A}({u^{n-1}}))\partial_x u^n\right)_{|_{x=r,l}}|_{m-2,T_1}\leq T_1 C(M) |(u^n-{u^{n-1}})_{|_{x=r,l}}|_{m-1,T_1}, \label{traceboundAn+1-n}\\[5pt]
&\|\mathcal{A}(u^n)-\mathcal{A}({u^{n-1}}))\partial_x u^n\|_{\mathbb{W}^{m-1}(T_1)} \leq  C(M)\|{u^{n}}-{u^{n-1}}\|_{\mathbb{W}^{m-1}(T_1),} \label{boundAn+1-n}\\[7pt]
  &  |V(G^{n+1})-V(G^{n})|_{H^{m-1}(0,T_1)} \nonumber \\ &\leq  T_1 C_{V, \Theta} (M) (|G^{n}-G^{n-1}|_{H^{m-1}(0,T_1)}+ |(u^n-{u^{n-1}})_{|_{x=r}}|_{m-1,T_1}). \label{boundVGn+1-n}
\end{align}
\end{lemma}
\begin{proof}
First, from Assumption \ref{assumWP} we have $\mathcal{A}\in C^\infty (\mathcal{U})$, then the second estimate in Lemma \ref{lemma_moser_sobolev} and \eqref{boundsequence} give
\begin{equation*}
\begin{aligned}
	& |((\mathcal{A}(u^n)-\mathcal{A}({u^{n-1}}))\partial_x u^n)_{|_{x=r, l}}|_{m-2,T_1} \\
	&\leq C\sum\limits_{|\alpha|+ |\beta|\leq m-2} |\partial^\alpha(\mathcal{A}({u^{n}})-\mathcal{A}({u^{n-1}}))_{|_{x=r, l}}|_{L^2(0,T_1)} \|(\partial^\beta\partial_x u^n)_{|_{x=r, l}}\|_{L^\infty(0,T_1)}\\
	&\leq C\sum\limits_{|\alpha|+ |\beta|\leq m-2} |\partial^\alpha(\mathcal{A}({u^{n}})-\mathcal{A}({u^{n-1}}))_{|_{x=r, l}}|_{L^2(0,T_1)}\|\partial^\beta u^n\|_{L^\infty(0,T_1;H^2(\mathcal{E}_+))}\\
	&\leq C(M) \sum\limits_{|\alpha|\leq m-2} |\partial^\alpha(u^n-{u^{n-1}})_{|_{x=r, l}}|_{L^2(0,T_1)}\| u^n\|_{\mathbb{W}^{m}(T_1)}\\[5pt]
	&\leq  C(M) |(u^n-{u^{n-1}})_{|_{x=r, l}}|_{m-2,T_1}
    \leq T_1 C(M) |(u^n-{u^{n-1}})_{|_{x=r, l}}|_{m-1,T_1}.
\end{aligned}
\end{equation*} 
Moreover, we have
\begin{equation*}
\begin{aligned}
 &\vertiii{((\mathcal{A}(u^n)-\mathcal{A}({u^{n-1}}))\partial_x u^n)(t)}_{m-1}
\\[2pt]&\leq C  \vertiii{\left(\mathcal{A}(u^n)-\mathcal{A}({u^{n-1}}) \right) (t)}_{m-1}  \vertiii{\d_x u^n (t)}_{m-1} \leq  C \vertiii{(u^n-{u^{n-1}}) (t)}_{m-1}  \vertiii{u^n (t)}_{m},\\[2pt]
\end{aligned}
\end{equation*}
where in the last inequality we have used the second estimate in Lemma \ref{lemma_moser}. Taking the supremum over $[0, T_1]$ and using \eqref{boundsequence} we get \eqref{boundAn+1-n}.\\
From Assumption \ref{assumWP}, we know that $V\in C^\infty(\mathcal{G})$ and $\Theta\in C^\infty(\mathcal{G}\times \mathcal{U}_r)$. Hence the second estimate in Lemma \ref{lemma_moser_sobolev} and \eqref{estimate_G_diff} yield
\begin{equation*}
\begin{aligned}
&|V(G^{n+1})\!-\!V(G^{n})|_{H^{m-1}(0,T_1)} \\\!&\leq  C_V ( |G^{n+1}, G^{n}|_{W^{[\frac{m-1}{2}], \infty}(0, T_1)}) |G^{n+1}-G^{n}|_{H^{m-1}(0, T_1)} \\
& \leq T_1  C_{V, \Theta} ( | G^{n}, {u^{n}}_{|_{x=r}}|_{W^{[\frac{m-1}{2}], \infty}(0, T_1)}, |G^{n+1},{u^{n+1}}_{|_{x=r}} |_{W^{[\frac{m-1}{2}], \infty}(0, T_1)}) 
\\ &\qquad \quad \times  (|G^{n}-G^{n-1}|_{H^{m-1}(0,T_1)}+ |(u^n-{u^{n-1}})_{|_{x=r}}|_{m-1,T_1}). \\
\end{aligned}
\end{equation*}
Using again \eqref{boundsequence}, we then prove \eqref{boundVGn+1-n}.
\end{proof}
	
\end{appendices}

\paragraph{\bf Acknowledgements.} 
The authors warmly thank David Lannes and Geoffrey Beck for their helpful comments and advises. They also thank the anonymous referees for their accurate remarks and comments that improved the article. The first author is supported by the Starting Grant project ``Analysis of moving incompressible fluid interfaces” (H2020-EU.1.1.-639227) operated by the European Research Council.
This material is also based upon work supported by the National Science Foundation under Grant No. DMS-1928930 while the second author participates in a program hosted by the Mathematical Sciences Research Institute in Berkeley, California, during the Spring 2021 semester.
The third author is supported by the European Unions Horizon 2020 research and innovation programme under the Marie Sklodowska-Curie grant agreement No.765579-ConFlex.\\

\bibliographystyle{siam}
\bibliography{mybib}

\begin{thebibliography}{10}

\bibitem{alinhac2012operateurs}
{\sc S.~Alinhac and P.~G{\'e}rard}, {\em Pseudo-differential operators and the
  Nash-Moser theorem}, vol.~82, Graduate Studies in Mathematics Series,
  American Mathematical Society, 2007.

\bibitem{babarit2018}
{\sc A.~Babarit}, {\em Ocean Wave Energy Conversion: Resource, Technologies and
  Performance}, Elsevier, 2018.

\bibitem{bastin-coron2016}
{\sc G.~Bastin and J.-M. Coron}, {\em Stability and Boundary Stabilization of
  1-{D} Hyperbolic Systems}, vol.~88 of Progress in Nonlinear Differential
  Equations and their Applications, Birkh\"{a}user/Springer, 2016.

\bibitem{lannesbeck2021boussinesq}
{\sc G.~Beck and D.~Lannes}, {\em Freely floating objects on a fluid governed
  by the {B}oussinesq equations}, Annales de l'Institut Henri Poincar\'{e} C.
  Analyse Non Lin\'{e}aire, 39 (2022), pp.~575--646.

\bibitem{benzoni-serre2007}
{\sc S.~Benzoni-Gavage and D.~Serre}, {\em Multi-dimensional hyperbolic partial
  differential equations: First-order Systems and Applications}, Oxford
  University Press on Demand, 2007.

\bibitem{bocchi2020floating}
{\sc E.~Bocchi}, {\em Floating structures in shallow water: local
  well-posedness in the axisymmetric case}, SIAM Journal on Mathematical
  Analysis, 52 (2020), pp.~306--339.

\bibitem{bocchi2020onthereturn}
\leavevmode\vrule height 2pt depth -1.6pt width 23pt, {\em On the return to
  equilibrium problem for axisymmetric floating structures in shallow water},
  Nonlinearity, 33 (2020), pp.~3594--3619.

\bibitem{bocchihevergara2021}
{\sc E.~Bocchi, J.~He, and G.~Vergara-Hermosilla}, {\em Modelling and
  simulation of a wave energy converter}, ESAIM: Proceedings and Surveys, 70
  (2021), pp.~68--83.

\bibitem{bresch2019waves}
{\sc D.~Bresch, D.~Lannes, and G.~M{\'e}tivier}, {\em Waves interacting with a
  partially immersed obstacle in the {B}oussinesq regime}, Analysis \& PDE, 14
  (2021), pp.~1085--1124.

\bibitem{coron_2007_book}
{\sc J.-M. Coron}, {\em Control and Nonlinearity}, vol.~136 of Mathematical
  Surveys and Monographs, American Mathematical Society, Providence, RI, 2007.

\bibitem{cummins1962impulse}
{\sc W.~E. Cummins}, {\em The impulse response function and ship motions},
  David Taylor Model Basin Washington DC,  (1962).

\bibitem{di1857note}
{\sc F.~Di~Bruno}, {\em Note sur une nouvelle formule de calcul
  diff{\'e}rentiel}, Quarterly Journal Pure Applied Mathematics, 1 (1857),
  pp.~359--360.

\bibitem{Dimakopoulos-Cooker-Bruce2017}
{\sc A.~S. Dimakopoulos, M.~J. Cooker, and T.~Bruce}, {\em The influence of
  scale on the air flow and pressure in the modelling of oscillating water
  column wave energy converters}, International Journal of Marine Energy, 19
  (2017), pp.~272--291.

\bibitem{evans1978}
{\sc D.~V. Evans}, {\em The oscillating water column wave-energy device}, IMA
  Journal of Applied Mathematics, 22 (1978), pp.~423--433.

\bibitem{evans1982}
\leavevmode\vrule height 2pt depth -1.6pt width 23pt, {\em Wave-power
  absorption by systems of oscillating surface pressure distributions}, Journal
  of Fluid Mechanics, 114 (1982), pp.~481--499.

\bibitem{evans1995}
{\sc D.~V. Evans and R.~Porter}, {\em Hydrodynamic characteristics of an
  oscillating water column device}, Applied Ocean Research, 17 (1995),
  pp.~155--164.

\bibitem{fanhelgato16air}
{\sc A.~F. Falc{\~a}o, J.~C. Henriques, and L.~M. Gato}, {\em Air turbine
  optimization for a bottom-standing oscillating-water-column wave energy
  converter}, Journal of Ocean Engineering and Marine Energy, 2 (2016),
  pp.~459--472.

\bibitem{IguLan21}
{\sc T.~Iguchi and D.~Lannes}, {\em Hyperbolic free boundary problems and
  applications to wave-structure interactions}, Indiana University Mathematics
  Journal, 70 (2021), pp.~353--464.

\bibitem{john1949}
{\sc F.~John}, {\em On the motion of floating bodies. {I}}, Communications on
  Pure and Applied Mathematics, 2 (1949), pp.~13--57.

\bibitem{john1950}
\leavevmode\vrule height 2pt depth -1.6pt width 23pt, {\em On the motion of
  floating bodies. {II}. {S}imple harmonic motions}, Communications on Pure and
  Applied Mathematics, 3 (1950), pp.~45--101.

\bibitem{lannes2017floating}
{\sc D.~Lannes}, {\em On the dynamics of floating structures}, Annals of PDE, 3
  (2017), pp.~11--81.

\bibitem{lannes2020modeling}
\leavevmode\vrule height 2pt depth -1.6pt width 23pt, {\em Modeling shallow
  water waves}, Nonlinearity, 33 (2020), pp.~1--57.

\bibitem{lannes2020generating}
{\sc D.~Lannes and L.~Weynans}, {\em Generating boundary conditions for a
  {B}oussinesq system}, Nonlinearity, 33 (2020), pp.~6868--6889.

\bibitem{li-yu1985}
{\sc T.~T. Li and W.~Yu}, {\em Boundary value problems for quasilinear
  hyperbolic systems}, vol.~V, Duke University Mathematics Series, Duke
  University, Mathematics Department, Durham, NC, 1985.

\bibitem{lopez2015}
{\sc I.~L{\'o}pez, B.~Pereiras, F.~Castro, and G.~Iglesias}, {\em Performance
  of {OWC} wave energy converters: influence of turbine damping and tidal
  variability}, International Journal of Energy Research, 39 (2015),
  pp.~472--483.

\bibitem{mai-sm-taka-tuc19}
{\sc D.~Maity, J.~San~Mart{\'\i}n, T.~Takahashi, and M.~Tucsnak}, {\em Analysis
  of a simplified model of rigid structure floating in a viscous fluid},
  Journal of Nonlinear Science, 29 (2019), pp.~1975--2020.

\bibitem{metivier2001}
{\sc G.~M{\'e}tivier}, {\em Stability of multidimensional shocks}, in Advances
  in the theory of shock waves, vol.~47, Progress Nonlinear Differential
  Equations Applied, Birkh{\"a}user Boston, Inc., Boston, MA, 2001,
  pp.~25--103.

\bibitem{metivier2012small}
\leavevmode\vrule height 2pt depth -1.6pt width 23pt, {\em Small Viscosity and
  Boundary Layer Methods: Theory, Stability Analysis, and Applications},
  Modeling and Simulation in Science, Engineering and Technology,
  Birkh{\"a}user Boston, Inc., Boston, MA, 2004.

\bibitem{pecher2017handbook}
{\sc A.~Pecher and J.~Peter~Kofoed}, {\em Handbook of Ocean Wave Energy},
  Springer Nature, 2017.

\bibitem{raghunathan1995wells}
{\sc S.~Raghunathan}, {\em The wells air turbine for wave energy conversion},
  Progress in Aerospace Sciences, 31 (1995), pp.~335--386.

\bibitem{Rashad-Califano-vanderSchaft_2020}
{\sc R.~Rashad, F.~Califano, A.~J. van~der Schaft, and S.~Stramigioli}, {\em
  Twenty years of distributed port-{H}amiltonian systems: a literature review},
  IMA Journal of Mathematical Control and Information, 37 (2020),
  pp.~1400--1422.

\bibitem{reza2013}
{\sc K.~Rezanejad, J.~Bhattacharjee, and C.~G. Soares}, {\em Stepped sea bottom
  effects on the efficiency of nearshore oscillating water column device},
  Ocean Engineering, 70 (2013), pp.~25--38.

\bibitem{reza2018}
{\sc K.~Rezanejad and C.~G. Soares}, {\em Enhancing the primary efficiency of
  an oscillating water column wave energy converter based on a dual-mass system
  analogy}, Renewable Energy, 123 (2018), pp.~730--747.

\bibitem{reza2017}
{\sc K.~Rezanejad, C.~G. Soares, I.~L{\'o}pez, and R.~Carballo}, {\em
  Experimental and numerical investigation of the hydrodynamic performance of
  an oscillating water column wave energy converter}, Renewable Energy, 106
  (2017), pp.~1--16.

\bibitem{vanderschaft-maschke_2002}
{\sc A.~J. van~der Schaft and B.~M. Maschke}, {\em Hamiltonian formulation of
  distributed-parameter systems with boundary energy flow}, Journal of Geometry
  and Physics, 42 (2002), pp.~166--194.

\bibitem{vergara-leugering-wang2021}
{\sc G.~Vergara-Hermosilla, G.~Leugering, and Y.~Wang}, {\em Boundary
  controllability of a system modelling a partially immersed obstacle}, ESAIM:
  Control, Optimisation and Calculus of Variations, 27 (2021).
\newblock Paper No. 80, 15 pp.

\bibitem{mati-tuc-vergara21}
{\sc G.~Vergara-Hermosilla, D.~Matignon, and M.~Tucsnak}, {\em Asymptotic
  behaviour of a system modelling rigid structures floating in a viscous
  fluid}, IFAC-PapersOnLine, 54 (2021), pp.~205--212.

\bibitem{wang2018nonlinear}
{\sc R.~Wang, D.~Ning, C.~Zhang, Q.~Zou, and Z.~Liu}, {\em Nonlinear and
  viscous effects on the hydrodynamic performance of a fixed {OWC} wave energy
  converter}, Coastal Engineering, 131 (2018), pp.~42--50.

\end{thebibliography}

\end{document}